\pgfplotsset{compat=1.16}
\numberwithin{equation}{section}
\newtheorem{mainthm}{Theorem}
\newtheorem{thm}{Theorem}[section]
\newtheorem{cor}[thm]{Corollary}
\newtheorem{remark}[thm]{Remark}
\newtheorem{lemma}[thm]{Lemma}
\newtheorem{proposition}[thm]{Proposition}
\newtheorem{definition}[thm]{Definition}
\newenvironment{hypothesis}{

\enumerate
}{
\endenumerate
}
\newcommand{\grad}{{\textnormal{grad}\,}}
\renewcommand{\div}{{\textnormal{div}\,}}
\newcommand{\eps}{\varepsilon}
\newcommand{\V}{\mathscr{V}}
\newcommand{\W}{\mathscr{W}}
\newcommand{\F}{\mathscr{F}}
\newcommand{\M}{\mathcal{M}}
\renewcommand{\P}{\mathcal{P}}
\newcommand{\R}{\mathbb{R}}
\newcommand{\N}{\mathbb{N}}
\newcommand{\Lip}{\operatorname{Lip}}
\newcommand{\spt}{\operatorname{spt}}
\newcommand{\J}{\mathcal{J}}
\def\weak{\rightharpoonup}
\title[Optimal control problems of nonlocal interaction equations]{Optimal control problems of nonlocal interaction equations}
\author{Simone Fagioli, Alic Kaufmann, Emanuela Radici}
\address[Simone Fagioli]{\newline Dipartimento di Ingegneria e Scienze dell’Informazione e Matematica\newline Universit\`a degli Studi dell’Aquila, Via Vetoio 1, 67100 Coppito, L’Aquila, It.}
\email{simone.fagioli@univaq.it}
\address[Alic Kaufmann]{\newline Institute of Mathematics\newline
EPFL, CH-1015 Lausanne, Switzerland}
\email{alic.kaufmann@alumni.epfl.ch}
\address[Emanuela Radici]{\newline Institute of Mathematics\newline
EPFL, CH-1015 Lausanne, Switzerland}
\email{emanuela.radici@epfl.ch}
\date{}
\keywords{Nonlocal transport equations; optimal control problems; Wasserstein distance; JKO scheme} 
\subjclass[2010]{49J20,45K05,492D25,92D25}
\begin{document}

\maketitle
\begin{abstract}
In the present work we deal with the existence of solutions for optimal control problems associated to transport equations.  The behaviour of a population of individuals will be influenced by the presence of a population of control agents whose role is to lead the dynamics of the individuals towards a specific goal.  The dynamics of the population of individuals is described by a suitable nonlocal transport equation,  while the role of the population of agents is designed by the optimal control problem.  This model has been first studied in \cite{bongini2017optimal} for a class of continuous nonlocal potentials, while in the present project we consider the case of mildly singular potentials in a gradient flow formulation of the target transport equation. 
\end{abstract}

\section{Introduction}
The modelling of self-organising system has been intensively investigated in recent decades. The different mechanisms underling the phenomena were largely studied, see \cite{CaFoToVe,CucDon,CucSma,KeMiAuWa,ViCzBeCoSh,ViZa}, and most of these works concern the description of different interaction rules such as attraction, repulsion and alignment, common in particle physics \cite{dobrushin,morrey}, cell and population biology \cite{Parisi,Cametal,CoKrFrLe,cristiani,PaViGr,Perthame}, and social sciences \cite{aletti,colombo1,CrPiTo,during,Short,toscani_opinion}. On the other side,  the control problem  of self- organising systems, namely the possibility of modifying the behaviour of agents by directing them towards a fixed target, while maintaining their usual rules of interaction, has produced an increasing number of contributions in the literature, see \cite{AbABMaAl,AlBoCrKa,AlHePa,AlPaZa,BoFoJuSc} and references therein.

As a result of the above considerations we can think to describe the system with a discrete set of $N$ interacting agents, or particles, with positions $X_1(t),...,X_N(t)\in \R^d$ depending on time, and with given masses $n_1,...,n_N > 0$. In a classical dynamic framework the evolution in time of the particles can be described through the Cauchy problem on $\R^{dN}$
\begin{equation*}
    \dot{X}_j(t)  =- \sum_{k =1}^N n_k K(X_j (t ) - X_k (t ))+u(t), 
\end{equation*}
for $j=1,\ldots,N$, where $u$ is a control variable, to be chosen in a set of admissible control functions, minimiser of a proper cost functional $\J:=\J\left(u,X_1,\ldots,X_N\right)$ taking into account the desired behaviour of the particles as well as the cost of the control. A natural modelling choice is to consider the control variable as a family of $M$ control particles $Y_1,...,Y_M\in \R^d$ of masses $m_1,...,m_M > 0$, interacting with $X_1(t),...,X_N(t)$. Thus, the problem can be formulated as follows
\begin{equation}\label{eq:control_discrete}
\begin{aligned}
       \dot{X}_j(t)  &=- \sum_{k =1}^N n_k K(X_j (t ) - X_k (t ))- \sum_{h=1}^M m_h  H(X_j (t ) - Y_h (t )),\quad j=1,\ldots,N,\\
       \bar{Y}(t) &= \operatorname*{argmin}\limits_{\bar{U}} \J\left(\bar{U}(t),\bar{X}(t)\right), 
\end{aligned}
\end{equation}
where $\bar{X}=(X_1,...,X_N)$, $\bar{Y}=(Y_1,...,Y_M)$ and $\bar{U}$ ranges over a set of admissible control vectors. 

In dealing with the optimisation problem \eqref{eq:control_discrete} one can face in the so-called \emph{curse of dimensionality}, see \cite{Bellman}, i.e. the difficulty in solving the problem when the dimension of $(X,Y)$ becomes large.

This dimensionality problem can be bypassed by introducing an optimal control strategy independent on the number of agents but depending on their distributions. More precisely, if  $\rho$  represents the distribution of the population of particles $(X_1,...,X_N)$, and $\nu$ is the distribution corresponding to the particles $(Y_1,...,Y_N)$, assuming that the  total mass of the population is conserved, the evolution equation in \eqref{eq:control_discrete} can be replaced by its continuous counterpart, that is the transport equation
\begin{equation*}\label{first-continuity-equation}
\partial_t\rho(t,x)=-\div(\rho(t,x) v_{\nu}(t,x)),
\end{equation*}
where the velocity field $v_\nu$ will depend on the distribution of the population $\rho$ and the distribution of the control agents $\nu$ via non-local interaction kernels, 
\[
v_\nu(t,x)=K\ast\rho(t,x)+H\ast\nu(t,x)=\int K(t,x-y)d\rho(t,y)+\int H(t,x-y)d\nu(t,y).
\]
In order to drive/control the dynamics of $\rho$, we minimize a functional $\J(\nu,\rho)$ under the constraint that the transport equation is satisfied.  
The functional could take into account the desired behavior of $\rho$ but also the cost of the control agents. 
More precisely,  we deal with the optimal control problem 
\begin{equation}\label{control-equation}
\inf\,\,\J(\nu,\rho)\quad \text{s.t.}\quad \partial_t\rho(t,x)=-\div[(K\ast\rho(t,x)+H\ast\nu(t,x))\rho(t,x)].
\end{equation}
The rigorous passage from the agent based optimisation problem \eqref{eq:control_discrete} to the continuous problem \eqref{control-equation} can be performed by applying the mean field game approach introduced by Lasry and Lions \cite{LasLio}, see  \cite{BuPiToTs,BuPiToTsRo,ForSol} and references therein for a more deep treatment of the topic. We also mention \cite{HeStWa} where a Galerkin-type discretization was used, and \cite{HaTa} where a BBGKY hierarchy approach to the finite dimensional optimal control problems to infinite dimensional control problems limit was performed.

Similarly to the finite dimensional case, the optimisation problem \eqref{control-equation} can be applied in several context such as evacuation problems \cite{AbABMaAl,AlBoCrKa,AlHePa,CrPiTo}, alignment in swarming dynamics of animals or robots \cite{CaFoPiTr,HaLiGu,HaWa} and social sciences \cite{AlPaZa,WoCaBo}.  Note that,  if $K$ corresponds to the gradient of an interaction potential $W$,  then our argument applies to a class of functionals $W$, including Morse and Yukawa type potentials,  with a wide range of applications in models in biology and materials science, \cite{BoBrCoHa,BoLoPeAl,DaRu,Yuk}.

Typical form for the functional $\J$ in \eqref{control-equation} is \[
\J(\nu,\rho) = \int_0^T \int_\Omega C(\nu(t,x),\rho(t,x)) dx\, dt,
\]
where the cost function $C$ describes a
certain mutual interaction between the measures $\nu$ and $\rho$ \cite{CaFoPiTr}. 
Examples for this cost function can be distance function $C(\nu(t,x),\rho(t,x))=c(t)|x-x_0|^p \nu(t,x)$ used in the evacuation problems or functions of the second moment of  $\rho$ that allow to control the alignment  of the species, as well as the clustering towards a target opinion, see \cite{CaFoPiTr,WoCaBo} and references therein for more details. 

The combination of a nonlocal transport equation and an optimisation problem, as formulated in \eqref{control-equation}, has been first studied in \cite{bongini2017optimal}, considering Lipschitz continuous densities $\rho$ in the space of probability measures with finite first moment and regular potentials $K$ and $H$.  The main novelty of the present manuscript is that we can provide a positive answer to problem \eqref{control-equation} in case of self-interactions kernels $K$ showing jump type singularities, thus answering an open question posed in  \cite{bongini2017optimal}. In order to develop this improvement we will consider potentials $K,H$ of the form $K = -\nabla W$ and $H = -\nabla V$, thus the nonlocal transport equation in \eqref{control-equation} can be rephrased as the following equation
\begin{equation}\label{eq: TE gradient flow}
\partial_t \rho(t,x) =  \div \big[\rho(t,x) (\nabla W \ast \rho (t,x) + \nabla V \ast \nu (t,x)) \big].
\end{equation} 
Existence and uniqueness of weak type solutions to the above equation can be investigated using the so-called Jordan Kinderlehrer and Otto (JKO) scheme, in the spirit of \cite{AGS,carrillo2011global,jko} or, more precisely, in the semi-implicit version of the scheme introduced in \cite{di2013measure}.
Indeed, equation \eqref{eq: TE gradient flow} can be formally reformulated as a gradient flow of an associated energy functional defined on the Wasserstein space of probability measures endowed with  the Wasserstein distance $d_{W_2}$, see \eqref{wass_dist} below.  The main difference in this case is that the associated energy functional is not static. Indeed, it will depend on the variable $t$ because of the presence of the cross interaction with the distribution of the control agents $\nu(t)$.  
Let us briefly exploit the formal gradient flow formulation of \eqref{eq: TE gradient flow}.  For every fixed time $t$, consider the energy functional $\F_{\nu(t)}: \P_2(\R^d)\to\R$ defined as 
\[
\F_{\nu(t)}(\rho) : =  \frac12\int_{\mathbb{R}^d}W\ast\rho d\rho +\int_{\mathbb{R}^d}V\ast\nu(t)d\rho.
\]
Then, computing the spatial gradient of the first variation of $\F_{\nu(t)}$ w.r.t. the measure $\rho$, we have
\begin{equation}\label{gradient-of-first-var}
\nabla\frac{\delta \F_{\nu(t)}}{\delta\rho}(t,x)=\nabla W\ast\rho(t,x)+\nabla V\ast\nu(t,x).
\end{equation}
Combining \eqref{eq: TE gradient flow} with \eqref{gradient-of-first-var} we obtain
\begin{equation*}
\partial_t\rho(t) =\div\Big(\rho(t)\nabla\frac{\delta \F_{\nu(t)}}{\delta\rho}\Big). 
\end{equation*}
In order to rigorously formulate the above equation as a gradient flow in the Wasserstein space, one should be able to identify the r.h.s. as follows
$$\div\Big(\rho(t)\nabla\frac{\delta \F_{\nu(t)}}{\delta\rho}\Big)=-\grad_{d_{W_2}} \F_{\nu(t)}(\rho(t)),$$
where the gradient should be understood as a tangent vector in the space of probability measures endowed with a proper Riemannian structure induced by the Wasserstein distance, \cite{otto}.   

Under the formalism sketched above, equation \eqref{eq: TE gradient flow} can be formally equivalently regarded as 
\begin{equation}\label{gradient-flow}
\partial_t\rho=-\grad_{d_{W_2}} \F_{\nu(t)}(\rho).
\end{equation}
For equation in that form the JKO-scheme, see \eqref{eq:JKO} below for a precise definition, can be interpreted as a semi-implicit Euler scheme in time, where the time dependence through the control measure $\nu$ is left explicit. More precisely, considering a time step $\tau$ and a sequence of times $t_k$, $k=0,1,\ldots$, one can approximate \eqref{gradient-flow} as follows
\begin{equation*}
    \rho(t_{k+1})=\rho(t_k)-\tau\grad_{d_{W_2}} \F_{\nu(t_k)}(\rho(t_{k+1})).
\end{equation*}
The above nonlinear equation can be solved by a \textit{minimising movements} approach, that results to be the following minimisation problem
\[
 \min_{\rho }\left\{\frac{1}{2\tau}d_{W_2}^2(\rho,\rho(t_k))+\F_{\nu(t_k)}(\rho)\right\},
\]
where the minimum is attempted over the space of probability measures.

Given these considerations, our contribution to the above line of research can be summarise as follows.  In Theorem \ref{thm: TE gradient flows} we will construct weak measure solutions to equation \eqref{eq: TE gradient flow} under the assumptions of  jump type singularities for the gradient of $W$. The proof uses a semi-implicit version of the JKO-scheme that allows to extend the standard compactness argument and improve the usual regularity in time, provided Lipschitz continuity in time of $\nu$. We then investigate existence of solutions to the optimisation problem \eqref{control-equation} in Theorem \ref{thm: OC gradient flows}. The regularity in time obtained in Theorem \ref{thm: TE gradient flows} allows us to require lower semi-continuity of the cost functional $\J$ w.r.t narrow convergence and not weak-$*$ as in \cite{bongini2017optimal}. Moreover, we can further improve this regularity in time considering suitable convexity assumptions on the interaction potentials.

\emph{Structure of the paper}: In Section \ref{preliminaries} we introduce some notation, set the main assumptions on the interaction potentials $W$ and $V$ in \emph{\hyperlink{self target}{(Self)} and \hyperlink{cross target}{(Cross)}} respectively and we state the main results in Theorems \ref{thm: TE gradient flows} and \ref{thm: OC gradient flows}. We conclude the section collecting some preliminary results.  In Section \ref{existence sol continuity equation} we prove Theorem \ref{thm: TE gradient flows} concerning the existence of weak measure solutions to the transport equation \eqref{eq: TE gradient flow}, proving that these solutions satisfy a suitable Lipschitz regularity in time. Finally,  Section  \ref{optimal-control-section} is devoted to the proof of Theorem \ref{thm: OC gradient flows} on the existence of solutions to the optimisation problem \eqref{control-equation}.

\section{Preliminaries}\label{preliminaries}

\subsection{The Wasserstein distance}
In this section we collect the basic definition and known results about Wasserstein distances  and probability measures that will be useful for our analysis. 

We denote by $\M(\R^d)$ the set of all positive finite measures and  $\M_M(\R^d) \subset \M(\R^d)$ the subset of measures with total mass less than $M$, \textit{i.e.} $\mu\in \M(\R^d)$ such that $\mu(\R^d)\leq M$. We call $\M_M^R(\R^d)$ the set of positive measures with mass smaller or equal than $M$ and support contained in the closure of the ball $B(0,R) \subset \R^d$.

With $\P(\R^d)$ we denote the set of probability measures and, if $p \geq 1$, $\P_p(\Omega)$ is the subset of $\P(\R^d)$ containing probability measures with finite $p$-moments, namely 
\begin{equation*}
\int_{\R^d}|x|^p d\mu(x)<+\infty.
\end{equation*}

The pushforward of a measure $\mu\in\P(\R^{d_1})$ by a function $f:\R^{d_1}\to\R^{d_2}$ is defined by
\begin{equation*}
f_{\#}\mu(A)=\mu(f^{-1}(A))\text{ for every Borel set }A\subset\R^{d_2}.
\end{equation*}
If $\rho\in\P(\R^{d_1}\times\mathbb{R}^{d_2})$ and $\pi_1$, $\pi_2$ designate the canonical projections defined on $\R^{d_1}\times\R^{d_2}$,  then ${\pi_1}_{\#}$ and ${\pi_2}_{\#}$ are called the first and second marginal of $\rho$. Given $\mu\in\P(\R^{d_1})$ and $\nu\in\P(\R^{d_2})$ we denote by $\Gamma(\mu,\nu)$ the set of all couplings between $\mu$ and $\nu$, i.e.  all the  probability measures in $\P(\R^{d_1}\times\R^{d_2})$ whose first marginal is $\mu$ and second marginal is $\nu$.

Let $p\geq 1$ and $\mu, \nu \in \P_p(\R^d)$, then their $p$-Wasserstein distance is defined as 
\begin{equation}\label{wass_dist}
d_{W_p}(\mu,\nu):=\left(\inf\limits_{\gamma\in\Gamma(\mu,\nu)}\int_{\R^d\times\R^d}|x-y|^pd\gamma(x,y)\right)^{1/p}.
\end{equation}
In what follows we will denote by $\Gamma_o(\mu,\nu)$ the set of optimal couplings between $\mu, \nu$,  namely $\Gamma_o(\mu,\nu)$ will contain those elements of $\Gamma(\mu,\nu)$ for which the infimum in the definition of $p$-Wasserstein distance is attained. 
It is a standard result to prove that $\Gamma_o(\mu,\nu)$ is non-empty.

Given $r > 0$ and $\nu \in \P_p(\R^d)$, we will denote by 
\[ \overline{B}_{d_{W_p}}(\nu,r)=\left\{\mu\in\P_p(\R^d):d_{W_p}(\nu,\mu) \leq r\right\} \]
the closed $p$-Wasserstein ball centered at $\nu$ of radius $r$. We further introduce the following spaces
\begin{equation}\label{Lip rho}
    \Lip_{L,d_{W_p}}(0,T; \P_p)=  \left\{  \mu: [0,T] \to \P_p(\R^d) : d_{W_p}(\mu(t),\mu(s)) \leq L|t-s|, 
 \forall \, s , t \in [0,T] \right\}, 
\end{equation}
for some $L > 0$,  and 
\begin{equation}\label{Lip nu}
\Lip_{L',d_*}(0,T; \M_M^R)= \left\{ \mu: [0,T] \to \M_M^R(\R^d) :  d_*(\mu(t),\mu(s)) \leq L' |t-s|, \forall t,s \in [0,T] \right\},
\end{equation}
for some $L',M,R > 0$,  where  and $d_*$ is a distance metrising the weak-$*$ convergence of measures. 

\subsection{Assumptions and main results} 
In the present work we consider potentials $W,V$ satisfying the following properties
\begin{hypothesis}
\item[\hypertarget{self target}{(Self)}] $W \in C(\R^d) \cap C^1(\R^d \setminus \{0\})$ is an even globally Lipschitz kernel such that $W(0)=0$,  $$W(x) \leq C(1 + |x|^2)$$ for some $C>0$ and $W$ is $\lambda$-convex for some $\lambda\leq 0$,
\item[\hypertarget{cross target}{(Cross)}] $V \in C^1(\R^d)$ is a globally Lipschitz function bounded from below by some $V_0 \in \R$ 
\end{hypothesis}

Being $W$ only Lipschitz continuous at the origin,  the term $\nabla W \ast \rho (t,\cdot)$ is not  well defined unless we better specify it.  By calling $\partial^0 W$ the element of minimal norm in the subdifferential of $W$ at $x$,  then \emph{\hyperlink{self target}{(Self)}} assumption ensures that 
\[  \partial^0 W \ast \mu (x) = \int_{\{ y \neq x\}} \nabla W(x - y)  d\mu(y) \ \ \text{ and } \ \   \partial^0 W \ast \mu \in L^2(\mu) \ \ \text{ for any } \  \mu \in \P_2(\R^d). \]

Concerning the control functional $\J$ we assume that
\begin{hypothesis}
\item[\hypertarget{cost target}{(Contr)}] $\J : A \to \R \cup \{+\infty\}$ be a control functional that is bounded from below and lower semi-continuous with respect to the narrow convergence of measures. 
\end{hypothesis}

In order to state rigorously our definition of weak solution of \eqref{eq: TE gradient flow}, let us first introduce the following piece of notation.  We denote by 
\begin{equation*}\label{C rho}
C_{d_n}(0,T; \P_2) = \left\{   \mu: [0,T] \to \P_2(\R^d) : \mu  \text{ is continuous w.r.t the narrow convergence of measures} \right\},
\end{equation*}   
where $d_n$ is a distance metrising the narrow convergence of measures. 

\begin{definition}[Weak measure solutions]\label{def: weak solutions}
Let $T>0$ and $\rho_0 \in \P_2(\R^d)$.  We say that $\rho \in C_{d_n}(0,T; \P_2)$ is a \emph{weak measure solution} of \eqref{eq: TE gradient flow} with initial datum $\rho_0$ if $\rho(0,\cdot) = \rho_0$ in $\P_2(\R^d)$,  $\partial^0 W \ast \rho \in L^1((0,T); L^2(\rho(t)))$ and for every $\phi \in  C_c^{\infty}((0,T)\times\R^d)$ it holds
\[
	\int_0^T \int_{\R^d}\left(\frac{\partial\phi}{\partial t}(t,x)+   (\partial^0 W \ast \rho (t,x) + \nabla V \ast \nu (t,x))   \cdot \nabla\phi(t,x)\right)d\rho(t,x)=0.
\]
\end{definition}

We are now in position to present the main results of this work. 

\begin{mainthm}[Transport problem]\label{thm: TE gradient flows}
Let $T>0$, $\rho_0 \in \P_2(\R^d)$ and $\nu \in \Lip_{L',d_*}(0,T;\M_M^R)$ be fixed.  Let then $W,V$ be a self and a cross interaction potential satisfying \emph{\hyperlink{self target}{(Self)} and \hyperlink{cross target}{(Cross)}} respectively.  
Then there exists a weak measure solution $\rho$ of \eqref{eq: TE gradient flow} with initial datum $\rho_0$ in the sense of Definition \ref{def: weak solutions} in the space $\Lip_{L,d_{W_2}}(0,T;\P_2)$, for some $L = L(M, \Lip(V), \Lip (W)) > 0$.
\end{mainthm}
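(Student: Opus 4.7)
The plan is to construct the solution via a semi-implicit JKO scheme in the spirit of \cite{di2013measure}, where the driving measure $\nu$ is frozen at the left endpoint of each discrete time step. I would fix $\tau = T/N$, set $t_k^\tau = k\tau$, $\rho_\tau^0 = \rho_0$, and recursively define
\begin{equation*}
\rho_\tau^{k+1} \in \argmin_{\mu \in \P_2(\R^d)}\left\{\frac{1}{2\tau}d_{W_2}^2(\mu,\rho_\tau^k) + \F_{\nu(t_k^\tau)}(\mu)\right\}.
\end{equation*}
Existence at each step would follow from the direct method: the term $\tfrac{1}{2\tau}d_{W_2}^2(\cdot,\rho_\tau^k)$ is coercive on $\P_2$; the lower bound $V \geq V_0$ together with the at-most-linear growth of $W$ implied by its global Lipschitzness (and $W(0)=0$) keeps $\F_{\nu(t_k^\tau)}$ bounded below on $d_{W_2}$-bounded sets; continuity of $W, V$ combined with the compact support and bounded mass of $\nu(t_k^\tau)$ yields narrow lower semicontinuity on such sublevel sets.

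Next I would write the first-order optimality condition. Denoting by $T_{k+1}$ the optimal transport map from $\rho_\tau^{k+1}$ to $\rho_\tau^k$, perturbing by $(\id+\varepsilon\xi)_{\#}\rho_\tau^{k+1}$ and using $\partial^0 W \ast \rho_\tau^{k+1}\in L^2(\rho_\tau^{k+1})$, the Euler-Lagrange equation takes the form
\begin{equation*}
\frac{\id - T_{k+1}}{\tau} = -\,\partial^0 W \ast \rho_\tau^{k+1} - \nabla V \ast \nu(t_k^\tau) \qquad \rho_\tau^{k+1}\text{-a.e.}
\end{equation*}
Since $W$ and $V$ are globally Lipschitz, the right-hand side is pointwise bounded by $L := \Lip(W) + M\Lip(V)$, giving the one-step estimate $d_{W_2}(\rho_\tau^{k+1},\rho_\tau^k) \leq L\tau$. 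The piecewise-constant interpolant $\rho_\tau(t) := \rho_\tau^{\lceil t/\tau\rceil}$ thus lies in $\Lip_{L,d_{W_2}}(0,T;\P_2)$ uniformly in $\tau$, with uniformly bounded second moments. An Ascoli-Arzelà argument in $(\P_2,d_{W_2})$ then yields a subsequence $\rho_\tau \to \rho$ in $C_{d_n}(0,T;\P_2)$, with limit $\rho \in \Lip_{L,d_{W_2}}(0,T;\P_2)$.

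The main difficulty will be passing to the limit in the singular self-interaction term when checking that $\rho$ satisfies Definition \ref{def: weak solutions}, since $\nabla W$ is merely bounded and admits a jump at the origin. The plan is to exploit the evenness of $W$ through antisymmetrisation: for any $\phi \in C_c^\infty((0,T)\times\R^d)$,
\begin{equation*}
\int \nabla\phi(x)\cdot(\partial^0 W \ast \rho)(x)\,d\rho(x) = \frac{1}{2}\int\!\!\int \nabla W(x-y)\cdot(\nabla\phi(x)-\nabla\phi(y))\,d\rho(x)\,d\rho(y),
\end{equation*}
so that the factor $|\nabla\phi(x)-\nabla\phi(y)|\leq \|D^2\phi\|_\infty|x-y|$ absorbs the discontinuity of $\nabla W$ at the origin, rendering the integrand jointly continuous and globally bounded on $\R^d\times\R^d$. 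Narrow convergence of the product measures $\rho_\tau(t)\otimes\rho_\tau(t)\to\rho(t)\otimes\rho(t)$ then transfers convergence of this term. The cross term is easier: the $d_*$-Lipschitz regularity of $\nu$ gives pointwise $\nabla V\ast\nu(t_k^\tau)(x)\to \nabla V\ast\nu(t)(x)$, and the uniform bound $M\Lip(V)$ combined with dominated convergence against $\rho_\tau(t)$ and in $t$ closes the argument. A discrete summation by parts in time on the Euler-Lagrange identity, combined with these passages, recovers the weak formulation of Definition \ref{def: weak solutions}, while the uniform $d_{W_2}$-Lipschitz bound ensures $\rho(0,\cdot)=\rho_0$.
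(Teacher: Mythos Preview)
Your overall strategy---semi-implicit JKO, Euler--Lagrange condition, antisymmetrisation of the $W$-term, Ascoli--Arzel\`a, and passage to the limit---matches the paper's proof closely. There is, however, one genuine gap in your derivation of the one-step estimate $d_{W_2}(\rho_\tau^{k+1},\rho_\tau^k)\le L\tau$. You invoke the optimal transport \emph{map} $T_{k+1}$ from $\rho_\tau^{k+1}$ to $\rho_\tau^k$ and read off the pointwise identity $(\id-T_{k+1})/\tau=-v$, but since $\rho_0$ is merely in $\P_2(\R^d)$ with no absolute continuity assumed, Brenier's theorem does not apply and neither $\rho_\tau^{k+1}$ nor this map need exist in map form. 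The weak Euler--Lagrange identity you obtain from the perturbation $(\id+\varepsilon\xi)_\#\rho_\tau^{k+1}$ only tests against gradients $\nabla\xi$, which is not enough to recover the pointwise equation or the $d_{W_2}$ bound directly.

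The paper circumvents this in two ways. First, the Euler--Lagrange identity (their equation \eqref{eq: fake weak form}) is written against an optimal \emph{plan} $\gamma_i^k\in\Gamma_o(\rho_i^k,\rho_{i+1}^k)$, never a map. Second, and independently, the one-step Wasserstein bound is obtained in Proposition~\ref{prop: compactness JKO} by a competitor argument: testing the minimality of $\rho_\tau^{k+1}$ against $\rho_\tau^k$ itself gives
\[
\frac{1}{2\tau}d_{W_2}^2(\rho_\tau^{k+1},\rho_\tau^k)\le\F_{\nu}(\rho_\tau^k)-\F_{\nu}(\rho_\tau^{k+1}),
\]
and the global Lipschitz bounds on $W,V$ combined with Young's inequality then yield $d_{W_2}(\rho_\tau^{k+1},\rho_\tau^k)\le C(M\Lip V+\Lip W)\tau$. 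This route needs no map and is the cleanest fix for your argument. Once you replace the map-based Euler--Lagrange by the plan-based version and derive the step bound via the competitor comparison, the remainder of your outline---the antisymmetrisation making $\nabla W(x-y)\cdot(\nabla\phi(x)-\nabla\phi(y))$ continuous and bounded, narrow convergence of product measures, and the $d_*$-Lipschitz control of $\nu$ to handle the cross term---coincides with what the paper does in its Steps~2--4.
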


\begin{mainthm}[Optimal control problem]\label{thm: OC gradient flows}
Let $T>0$ and $\rho_0 \in \P_2(\R^d)$,  and assume that $W,V$ are interaction potentials satisfying \emph{\hyperlink{self target}{(Self)},  \hyperlink{cross target}{(Cross)}} respectively. Introduce the space
\[
 A:= Lip_{L',d_*}(0,T;\M_M^R) \, \times \,  Lip_{L,d_{W_2}}(0,T;\P_2)  \\
\]
and the control functional $\J$ be under assumption \emph{\hyperlink{cost target}{(Contr)}}. Then the problem
\begin{equation}\label{eq: OC gradient flow}
\inf_A \, \J(\nu,\rho) \;  \text{ s. t. $\rho$ is a weak measure solution of \eqref{eq: TE gradient flow} with $\nu$ and initial datum }  \rho(0) = \rho_0 
\end{equation}
admits a solution.

Moreover, if we further assume that $V$ is $\lambda'$-convex for some $\lambda' \leq 0$,  then the minimization in \eqref{eq: OC gradient flow} still admits in the bigger space  
\[
 A':= Lip_{L',d_*}(0,T;\M_M^R) \, \times \,  C_{d_n}(0,T; \P_2). 
\]
\end{mainthm}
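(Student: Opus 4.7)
The plan is to apply the direct method of the calculus of variations, combined with a uniqueness argument for the extension to $A'$. Let $(\nu_n,\rho_n)\subset A$ be a minimizing sequence for $\J$. I first extract compactness. The family $\{\nu_n\}$ takes values in $\M_M^R(\R^d)$, a narrowly compact set (bounded mass and bounded support imply tightness); since $d_*$ metrises narrow convergence on this set, $\M_M^R(\R^d)$ is $d_*$-compact. The equi-Lipschitz condition in $d_*$ then allows me to invoke Arzelà--Ascoli and extract, up to a subsequence, $\nu_n\to\nu$ uniformly in $d_*$ with $\nu\in\Lip_{L',d_*}(0,T;\M_M^R)$. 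For $\{\rho_n\}$, the constraint $\rho_n(0)=\rho_0$ together with the $d_{W_2}$-Lipschitz bound yields a uniform second moment estimate by the triangle inequality $d_{W_2}(\rho_n(t),\delta_0)\leq d_{W_2}(\rho_0,\delta_0)+LT$, hence tightness at each $t$. Applying Arzelà--Ascoli on the Polish space $\P_2(\R^d)$ endowed with narrow convergence, I extract $\rho_n\to\rho$ narrowly uniformly in $t$. Lower semi-continuity of $d_{W_2}$ under narrow convergence preserves the $L$-Lipschitz property, so $\rho\in\Lip_{L,d_{W_2}}(0,T;\P_2)$.

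Next I show that $\rho$ is a weak measure solution of \eqref{eq: TE gradient flow} driven by $\nu$. Testing Definition \ref{def: weak solutions} against $\phi\in C_c^\infty((0,T)\times\R^d)$, the $\partial_t\phi$ term passes to the limit by narrow convergence. For the cross term, $\nabla V$ is continuous and globally bounded, so $\nabla V\ast\nu_n\to\nabla V\ast\nu$ locally uniformly (by the uniform $d_*$-convergence of $\nu_n$), which passes against $\rho_n\to\rho$ via dominated convergence. The delicate step is the self-interaction term: exploiting the evenness of $W$, I symmetrise
\[
\int(\partial^0W\ast\rho_n)\cdot\nabla\phi\,d\rho_n=\tfrac12\iint\nabla W(x-y)\cdot\bigl(\nabla\phi(x)-\nabla\phi(y)\bigr)\,d\rho_n(x)d\rho_n(y).
\]
The new integrand is globally bounded by $\Lip(W)\|\nabla\phi\|_\infty$ and vanishes on the diagonal at rate $\Lip(W)\Lip(\nabla\phi)|x-y|$, so the jump of $\nabla W$ at $0$ is absorbed and the function is continuous on $\R^d\times\R^d$. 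Narrow convergence $\rho_n\otimes\rho_n\to\rho\otimes\rho$ passes the spatial integral, and dominated convergence on $[0,T]$ yields the limit in time. Hence $(\nu,\rho)$ is admissible, and lower semi-continuity of $\J$ under narrow convergence (hypothesis (Contr)) gives $\J(\nu,\rho)\leq\liminf\J(\nu_n,\rho_n)=\inf_A\J$, so $(\nu,\rho)$ is a minimizer in $A$.

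For the extension to $A'$, assume additionally that $V$ is $\lambda'$-convex. Then for each fixed $t$, $\F_{\nu(t)}$ is $\tilde\lambda$-geodesically convex on $\P_2(\R^d)$ for some $\tilde\lambda=\tilde\lambda(\lambda,\lambda',M)$, since $\rho\mapsto\tfrac12\int W\ast\rho\,d\rho$ is $\lambda$-convex along Wasserstein geodesics and the external potential $V\ast\nu(t)$ inherits pointwise semiconvexity from $V$ (with constant proportional to the mass of $\nu(t)$), so that $\rho\mapsto\int V\ast\nu(t)\,d\rho$ is semiconvex along geodesics. A standard Grönwall-type stability estimate for Wasserstein gradient flows of semi-convex functionals then yields uniqueness: any $\rho\in C_{d_n}(0,T;\P_2)$ solving \eqref{eq: TE gradient flow} for $\nu$ with initial datum $\rho_0$ must coincide with the Lipschitz-in-time solution produced by Theorem \ref{thm: TE gradient flows}, and hence automatically belongs to $\Lip_{L,d_{W_2}}(0,T;\P_2)$. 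The feasible sets for the minimization in $A'$ and $A$ therefore coincide, and existence in $A'$ follows from the previous case.

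The main obstacle is the passage to the limit in the self-interaction term: the jump of $\nabla W$ at $0$ obstructs a direct narrow-limit argument, and the symmetrisation exploiting evenness of $W$ is the key device that converts the singular integrand into a bounded continuous one on $\R^d\times\R^d$. A secondary subtlety is that the minimizing sequence $\{\rho_n\}$ is not expected to be $d_{W_2}$-compact (uniform integrability of $|x|^2$ need not follow from a uniform second moment bound), but narrow compactness is enough both to pass the weak formulation and to preserve the Lipschitz property via lower semi-continuity of $d_{W_2}$.
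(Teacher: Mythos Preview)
Your argument tracks the paper's proof closely: direct method, Ascoli--Arzel\`a compactness for both factors, the symmetrisation device to turn $\partial^0 W\ast\rho_n\cdot\nabla\phi$ into a bounded continuous integrand on $\R^d\times\R^d$, and then uniqueness via semiconvexity for the extension to $A'$. On the first part there is nothing to correct.

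There is, however, a genuine missing step in the $A'$ extension. You invoke a ``standard Gr\"onwall-type stability estimate'' to conclude that any $\rho\in C_{d_n}(0,T;\P_2)$ solving \eqref{eq: TE gradient flow} coincides with the Lipschitz solution. But the EVI-based stability argument (the paper's Proposition~\ref{uniqueness of weak solutions}) requires the competing solutions to be \emph{absolutely continuous in $d_{W_2}$}: the key identity $\int v(t)(x)\cdot(y-x)\,d\gamma=\tfrac12\frac{d}{dt}d_{W_2}^2(\rho(t),\eta)$ comes from \cite[Theorem~8.4.7]{AGS}, which needs $\rho\in AC^2((0,T);\P_2)$. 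Narrow continuity alone does not give this. The paper closes the gap by observing that the transport velocity $v(t)=\partial^0 W\ast\rho(t)+\nabla V\ast\nu(t)$ is bounded in $L^2(\rho(t))$ uniformly in $t$ (by $\Lip(W)+M\Lip(V)$), so \cite[Theorem~8.3.1]{AGS} upgrades any weak measure solution to a $d_{W_2}$-absolutely continuous curve. Only then does the stability estimate apply. You should insert this verification before appealing to uniqueness.
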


\subsection{Technical preliminaries}
In this  section of the Preliminaries, we collect a couple of auxiliary technical results which will be useful in the proof of Theorems \ref{thm: TE gradient flows} and \ref{thm: OC gradient flows}. 

\begin{lemma}[\cite{Billingsley1999optimal}, Theorem 2.8] \label{narroConvProd}
Let $(\mu_k)_{k\ge 1},(\nu_k)_{k\ge 1}\subset \P(\mathbb{R}^d)$ such that $\mu_k\weak\mu\in \P(\mathbb{R}^d)$ and $\nu_k\weak\nu\in \P(\mathbb{R}^d)$. Then we also have narrow convergence of the product measure, i.e. $\mu_k\otimes\nu_k\weak\mu\otimes\nu$.
\end{lemma}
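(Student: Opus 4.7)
The plan is to verify the convergence by testing against arbitrary bounded continuous functions on $\R^d \times \R^d$ and reducing the general case to the case of product test functions, which is immediate.

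First I would handle the case of product-type test functions. If $g, h \in C_b(\R^d)$ and $f(x,y) = g(x) h(y)$, then by Fubini
\[
\int_{\R^d \times \R^d} f \, d(\mu_k \otimes \nu_k) = \Big(\int g \, d\mu_k\Big)\Big(\int h \, d\nu_k\Big),
\]
and since $\mu_k \weak \mu$ and $\nu_k \weak \nu$, both factors converge to $\int g \, d\mu$ and $\int h \, d\nu$ respectively, whose product equals $\int f \, d(\mu \otimes \nu)$. By linearity this extends to finite linear combinations of such products.

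Next I would address a general $f \in C_b(\R^d \times \R^d)$. The key tool is tightness: since $(\mu_k)$ and $(\nu_k)$ are narrowly convergent, each family is tight, hence for every $\eps > 0$ there are compact sets $K_1, K_2 \subset \R^d$ with $\mu_k(\R^d \setminus K_1) \le \eps$ and $\nu_k(\R^d \setminus K_2) \le \eps$ uniformly in $k$; the same bounds hold for the limits $\mu, \nu$. Therefore $(\mu_k \otimes \nu_k)$ is tight on $\R^d \times \R^d$, and one only needs to control the integral of $f$ on the compact product $K_1 \times K_2$ up to an error controlled by $2\eps \|f\|_\infty$. On the compact set $K_1 \times K_2$, Stone–Weierstrass guarantees that $f$ can be uniformly approximated by finite sums of product functions $g_i(x) h_i(y)$ (with $g_i \in C(K_1)$, $h_i \in C(K_2)$, extended to bounded continuous functions on $\R^d$ via Tietze). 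A standard three-$\eps$ argument — approximation on the compact set, tightness outside, and the product-function convergence from the first step — then yields $\int f \, d(\mu_k \otimes \nu_k) \to \int f \, d(\mu \otimes \nu)$.

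Alternatively, and perhaps more cleanly, I would use Prokhorov: tightness of $(\mu_k \otimes \nu_k)$ gives that any subsequence admits a further subsequence converging narrowly to some probability measure $\sigma$ on $\R^d \times \R^d$. Testing $\sigma$ against product functions $g \otimes h$ using the first step identifies $\int g \otimes h \, d\sigma = \int g \, d\mu \int h \, d\nu$, which by a monotone class / uniqueness argument forces $\sigma = \mu \otimes \nu$. Since every subsequential narrow limit equals the same measure, the whole sequence converges.

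The main obstacle is the density step: one cannot invoke Stone–Weierstrass directly on the noncompact space $\R^d \times \R^d$, so the reduction to a compact product via uniform tightness of both factors is essential. Once tightness is in hand, the rest is routine, and the Prokhorov variant makes the identification of the limit transparent without having to explicitly approximate $f$ by product sums.
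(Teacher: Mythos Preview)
Your argument is correct and follows the standard route (product test functions via Fubini, then either Stone--Weierstrass on a compact product obtained from tightness, or Prokhorov plus identification of subsequential limits). Note, however, that the paper does not supply its own proof of this lemma: it is simply quoted from Billingsley, \emph{Convergence of Probability Measures}, Theorem~2.8, and used as a black box. Your write-up is essentially a reconstruction of the classical proof one finds there, so there is nothing to compare beyond observing that you have filled in what the paper deliberately outsourced.
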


The next result relates the lower semi-continuity of a function to that of the corresponding integrand functional. 

\begin{proposition}\label{lscOfFunctional}
Let $f:\mathbb{R}^d\to\mathbb{R}\cup\{+\infty\}$ be a lower semi-continuous function, bounded from below. Then the functional $J : \P(\R^d) \to \R \cup \{+\infty\}$ defined as $J(\mu)=\int fd\mu$ is lower semi-continuous with respect to the narrow convergence of measures.
\end{proposition}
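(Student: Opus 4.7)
The plan is to exploit the classical fact that on the Polish space $\R^d$ any lower semi-continuous function bounded from below is the pointwise supremum of a monotone increasing sequence of bounded continuous functions. Once such an approximation is in place, the lower semi-continuity of $J$ follows from the very definition of narrow convergence applied to the approximating family, combined with the monotone convergence theorem.

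Concretely, I would fix $C \geq 0$ with $f \geq -C$ (assuming $f \not\equiv +\infty$, otherwise $J \equiv +\infty$ and there is nothing to prove) and, for each $k \in \N$, consider the inf-convolution
\[
\tilde f_k(x) := \inf_{y \in \R^d} \bigl\{ f(y) + k|x-y| \bigr\}.
\]
A routine verification shows that each $\tilde f_k$ is $k$-Lipschitz, satisfies $-C \leq \tilde f_k \leq f$, that the sequence $(\tilde f_k)_k$ is pointwise non-decreasing, and that by the lower semi-continuity of $f$ one has $\tilde f_k(x) \uparrow f(x)$ for every $x \in \R^d$. Truncating from above, $f_k := \min\{\tilde f_k, k\}$ defines functions in $C_b(\R^d)$ with $-C \leq f_k \leq k$, $f_k \leq f$, and $f_k \uparrow f$ pointwise on $\R^d$.

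Finally, let $(\mu_n)_n \subset \P(\R^d)$ converge narrowly to $\mu$. Since $f_k \in C_b(\R^d)$, narrow convergence yields $\int f_k \, d\mu_n \to \int f_k \, d\mu$, and the inequality $f_k \leq f$ gives
\[
\int f_k \, d\mu \;=\; \lim_{n \to \infty} \int f_k \, d\mu_n \;\leq\; \liminf_{n \to \infty} \int f \, d\mu_n
\]
for every $k$. Because $f_k + C \geq 0$ and $f_k + C \uparrow f + C$, Beppo Levi's monotone convergence theorem applied to the measure $\mu$ gives $\int f_k \, d\mu \to \int f \, d\mu \in \R \cup \{+\infty\}$, so passing to the limit $k \to \infty$ on the left-hand side above yields $J(\mu) \leq \liminf_{n \to \infty} J(\mu_n)$, which is the claimed lower semi-continuity. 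The only mildly technical step is the construction of the approximating family $(f_k)$; the remainder is a direct interplay between the definition of narrow convergence and monotone convergence, so I do not anticipate any real obstacle.
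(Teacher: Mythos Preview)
Your proof is correct and follows essentially the same strategy as the paper's: approximate $f$ from below by a non-decreasing sequence of bounded continuous functions, then combine the definition of narrow convergence with the monotone convergence theorem. The only cosmetic difference is that you construct the approximants explicitly via inf-convolution (Moreau--Yosida regularisation plus truncation), whereas the paper invokes Baire's theorem to obtain such a sequence abstractly.
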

\begin{proof}
Thanks to Baire Theorem,  it is possible to find a non-decreasing sequence of continuous functions  $f_k : \R^d \to \R$ converging pointwise to $f$.  Up to a careful truncation argument,  it is always possible to assume that the functions $f_k$ are bounded from above (in general,  not uniformly in $k$).  On the other hand,  being $f$ bounded from below by assumption,  we can always assume that $f_k + c \geq 0$ for some $c \in \R$. 

By the monotone convergence Theorem we deduce that 
\[ \lim_{k \to \infty} \int_{\R^d} (f_k-c)d\mu =  \int_{\R^d} (f-c)d\mu  \quad \text{ for every } \mu \in \P(\R^d)  \]
which in turn implies that $\lim_{k \to \infty} J_k(\mu) = J(\mu)$, where we denoted $J_k(\mu)$ the functional $\int f_kd\mu$.  Since the functionals $J_k$ are non-decreasing in $k$ and they are all continuous with respect to the narrow convergence of measures,  i.e.  
\[  d_n(\mu_h, \mu) = 0 \ \ \Longrightarrow \ \ J_k(\mu_h) \to J_k( \mu),   \]
we infer that $J$ is lower semi-continuous with respect to the narrow convergence. 
\end{proof}

A straightforward consequence is the lower semi-continuity of the optimal transport cost.

\begin{proposition}[\cite{ambrosio2013user,AGS,santambrogio2015optimal}]\label{lscTranpCost}
Let $c:\mathbb{R}^d\times\mathbb{R}^d\to\mathbb{R}_{\ge0}$ be a non-negative lower semi-continuous cost function and $\mu,\nu \in \P(\R^d)$. Then the optimal transport cost 
\begin{equation*}
C(\mu,\nu)=\inf\limits_{\gamma\in\Gamma(\mu,\nu)}\int_{\mathbb{R}^d\times\mathbb{R}^d}c(x,y)d\gamma(x,y)
\end{equation*}
is lower semi-continuous with respect to narrow convergence of measures.
\end{proposition}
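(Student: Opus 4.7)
The plan is to establish joint narrow lower semi-continuity via a tightness-plus-compactness argument for the optimal couplings. Fix narrowly convergent sequences $\mu_n \weak \mu$ and $\nu_n \weak \nu$ in $\P(\R^d)$; after passing to a subsequence, I reduce to the case $C(\mu_n,\nu_n) \to \ell := \liminf_n C(\mu_n,\nu_n)$, and may assume $\ell<+\infty$ since otherwise there is nothing to prove. For each $n$ I select an optimal coupling $\gamma_n \in \Gamma_o(\mu_n,\nu_n)$, whose non-emptiness is recalled in the text.

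The first key step will be to show that the family $\{\gamma_n\}_n \subset \P(\R^d\times \R^d)$ is tight. Narrowly convergent sequences in $\P(\R^d)$ are tight by Prokhorov's theorem, so for each $\eps>0$ there exist compact sets $K_1,K_2\subset\R^d$ with $\sup_n \mu_n(\R^d\setminus K_1)<\eps/2$ and $\sup_n \nu_n(\R^d\setminus K_2)<\eps/2$. Since $\gamma_n$ has marginals $\mu_n$ and $\nu_n$, the compact product $K_1\times K_2\subset\R^d\times\R^d$ satisfies
\begin{equation*}
\gamma_n\bigl((\R^d\times\R^d)\setminus(K_1\times K_2)\bigr) \le \mu_n(\R^d\setminus K_1) + \nu_n(\R^d\setminus K_2) < \eps.
\end{equation*}
Applying Prokhorov's theorem once more, up to a further (unrelabeled) subsequence one has $\gamma_n \weak \gamma$ narrowly for some $\gamma\in\P(\R^d\times\R^d)$.

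Next, I identify $\gamma$ as a coupling of $\mu$ and $\nu$: for any bounded continuous $\varphi:\R^d\to\R$, the function $(x,y)\mapsto \varphi(x)$ is bounded and continuous on $\R^d\times\R^d$, so narrow convergence of $\gamma_n$ to $\gamma$ yields $\int\varphi\,d(\pi_1)_\#\gamma = \lim_n \int\varphi\,d\mu_n = \int\varphi\,d\mu$, and analogously $(\pi_2)_\#\gamma = \nu$, hence $\gamma\in\Gamma(\mu,\nu)$. Finally, applying the argument of Proposition \ref{lscOfFunctional} to the non-negative lower semi-continuous cost $c$ on $\R^d\times\R^d$ (its proof carries over verbatim to the product space) yields
\begin{equation*}
C(\mu,\nu)\le \int c\,d\gamma \le \liminf_n \int c\,d\gamma_n = \liminf_n C(\mu_n,\nu_n) = \ell,
\end{equation*}
which is the desired semi-continuity. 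The only non-routine point I anticipate is the tightness verification above; once it is in place, the remainder is a direct combination of Prokhorov's theorem, the marginal identification, and Proposition \ref{lscOfFunctional}.
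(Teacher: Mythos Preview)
Your argument is correct and is precisely the standard proof found in the references the paper cites. Note that the paper itself does not give a proof of this proposition: it only states it with citations to \cite{ambrosio2013user,AGS,santambrogio2015optimal} and prefaces it with the remark that it is ``a straightforward consequence'' of Proposition~\ref{lscOfFunctional}. Your write-up makes that consequence explicit: tightness of the optimal plans from tightness of the marginals, Prokhorov to extract a limit coupling, marginal identification, and then Proposition~\ref{lscOfFunctional} (applied on $\R^d\times\R^d\cong\R^{2d}$) for the lower semi-continuity of $\gamma\mapsto\int c\,d\gamma$. There is nothing to add or correct.
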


\begin{cor}\label{lscWp} 
Since the cost $c(x, y) = |x - y|^p$ satisfies the conditions of Proposition \ref{lscTranpCost},  the $p$-Wasserstein distance is lower semi-continuous with respect to the narrow convergence of measures.
\end{cor}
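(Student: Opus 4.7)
The plan is to verify directly that the hypothesis of Proposition \ref{lscTranpCost} is satisfied for the cost $c(x,y) = |x-y|^p$, and then to conclude by monotonicity of the $p$-th root. First, the function $(x,y) \mapsto |x-y|^p$ is continuous on $\R^d \times \R^d$ (hence lower semi-continuous) and non-negative, so Proposition \ref{lscTranpCost} applies to the associated optimal transport cost
\[
C(\mu,\nu) = \inf_{\gamma \in \Gamma(\mu,\nu)} \int_{\R^d \times \R^d} |x-y|^p \, d\gamma(x,y) = d_{W_p}(\mu,\nu)^p.
\]
Consequently, if $(\mu_k)_k, (\nu_k)_k \subset \P_p(\R^d)$ narrowly converge to $\mu, \nu$ respectively, then $C(\mu,\nu) \leq \liminf_{k\to\infty} C(\mu_k,\nu_k)$.

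To pass from $C$ to $d_{W_p}$, I would invoke the fact that $t \mapsto t^{1/p}$ is continuous and monotone non-decreasing on $[0,+\infty)$; applying it to the inequality above yields
\[
d_{W_p}(\mu,\nu) = C(\mu,\nu)^{1/p} \leq \left( \liminf_{k\to\infty} C(\mu_k,\nu_k) \right)^{1/p} = \liminf_{k\to\infty} d_{W_p}(\mu_k,\nu_k),
\]
which is precisely the lower semi-continuity of $d_{W_p}$ with respect to the narrow convergence on each factor. There is no real obstacle here; the corollary is a direct specialisation of Proposition \ref{lscTranpCost} combined with the elementary observation that a continuous monotone non-decreasing function preserves lower semi-continuity.
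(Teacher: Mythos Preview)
Your argument is correct and matches the paper's approach: the corollary is stated there without a separate proof, as an immediate consequence of Proposition~\ref{lscTranpCost}. Your added remark that the monotone continuous map $t\mapsto t^{1/p}$ transfers lower semi-continuity from $d_{W_p}^p$ to $d_{W_p}$ is the only detail worth making explicit, and you have done so correctly.
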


The next Lemma shows that closed $p$-Wasserstein balls are sequentially compact in $\P_p(\R^d)$ with respect to the $q$-Wasserstein topology for every $1 \leq q < p$.  Despite this result being well known in the literature, we report the proof for completeness.

\begin{proposition}\label{wassBallCompq12}
Let $\nu\in\P_p(\R^d)$ and $(\mu_k)_{k}\subset\P_p(\R^d)$ such that $(\mu_k)_{k} \subset \overline{B}_{d_{W_p}}(\nu,r)$ for some $r >0$.  Then there exists $\mu\in\P_p(\R^d) \cap \overline{B}_{d_{W_p}}(\nu,r)$ such that $W_q(\mu_k,\mu)\to0$ for all $q\in[1,p)$.
\end{proposition}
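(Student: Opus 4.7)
The plan is to proceed in three moves: establish narrow compactness of $(\mu_k)$ via a uniform moment bound, identify the limit as an element of the closed Wasserstein ball using lower semi-continuity of $W_p$, and finally upgrade narrow convergence to $W_q$ convergence for every $q<p$ using uniform integrability of $|x|^q$.

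First I would show that the $p$-moments of $\mu_k$ are uniformly bounded. By the triangle inequality for $d_{W_p}$ and the hypothesis $\mu_k\in\overline{B}_{d_{W_p}}(\nu,r)$,
\[
\Bigl(\int_{\R^d}|x|^p d\mu_k(x)\Bigr)^{1/p}=d_{W_p}(\mu_k,\delta_0)\leq r+d_{W_p}(\nu,\delta_0)=:M<+\infty.
\]
Markov's inequality then yields tightness of $(\mu_k)$, and Prokhorov's theorem provides a (non-relabelled) subsequence narrowly converging to some $\mu\in\P(\R^d)$. Applying Corollary \ref{lscWp} to this subsequence,
\[
d_{W_p}(\nu,\mu)\leq\liminf_{k\to\infty}d_{W_p}(\nu,\mu_k)\leq r,
\]
which in particular forces $\mu\in\P_p(\R^d)$ and $\mu\in\overline{B}_{d_{W_p}}(\nu,r)$, as required.

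To obtain $d_{W_q}(\mu_k,\mu)\to 0$ for $q\in[1,p)$, I would invoke the standard equivalence stating that narrow convergence together with convergence of the $q$-moments is equivalent to $W_q$ convergence on $\P_q(\R^d)$. The narrow convergence is already in hand; for the moments one uses the elementary control
\[
\int_{\{|x|>R\}}|x|^q d\mu_k(x)\leq R^{q-p}\int_{\R^d}|x|^p d\mu_k(x)\leq R^{q-p}M^p,
\]
which is uniform in $k$ and vanishes as $R\to\infty$ because $q<p$. The same estimate applies to $\mu$ thanks to the $p$-moment bound obtained from Step 2. Splitting $|x|^q=(|x|^q\wedge R^q)+(|x|^q-R^q)_+$, the bounded continuous part passes to the limit under narrow convergence, while the tails are uniformly small; sending $R\to\infty$ gives $\int|x|^q d\mu_k\to\int|x|^q d\mu$, which concludes the proof.

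The main obstacle is the third step, where one has to transfer a qualitative narrow compactness into the quantitative $W_q$ convergence. The key observation making this routine is that uniform integrability of $|x|^q$ comes for free from the uniform $p$-moment bound whenever $q<p$; the borderline case $q=p$ would instead genuinely require more information (such as a uniform de la Vallée Poussin superlinear bound) and is precisely why the statement excludes it.
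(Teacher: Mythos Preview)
Your proposal is correct and follows essentially the same route as the paper: a uniform $p$-moment bound gives tightness and hence narrow compactness, lower semi-continuity of $d_{W_p}$ (Corollary~\ref{lscWp}) places the limit in the closed ball, and the tail estimate $\int_{\{|x|>R\}}|x|^q\,d\mu_k\le R^{q-p}M^p$ provides the uniform integrability needed to upgrade narrow convergence to $W_q$ convergence. The only cosmetic differences are that the paper obtains the $p$-moment bound via an explicit coupling inequality rather than the triangle inequality with $\delta_0$, and it verifies $\mu\in\P_p(\R^d)$ separately by monotone convergence before invoking Corollary~\ref{lscWp}, whereas you extract both facts at once from the finiteness of $d_{W_p}(\nu,\mu)$.
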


\begin{proof}
In order to prove the statement it is enough to show that the $q$-moments of $(\mu_k)_k$ are uniformly integrable for every $q \in [1,p)$.  Then tightness of the sequence $(\mu_k)_k$ and the convergence of the $q$-moments will follow as a consequence.  

Our first aim is to prove that for every $\eps > 0$ there exists some $R = R(\eps) >0$ such that 
\begin{equation}\label{eq:uniform integrability q moments}
\int_{\{|x|>R\}}|x|^q d\mu_k(x)<\eps \quad \text{ for all } \  k\in\N \ \text{ and } q \in [1,p).
\end{equation}
In order to do this, we first need a uniform estimate on the $p$-moments of the sequence $(\mu_k)_k$.  This comes straightforward by the $p$-Wasserstein bound,  indeed for each $k \in \N$ we can find $\gamma_k \in \Gamma_o(\mu_k,\nu)$ and compute 
\begin{equation}\label{eq:uniform bound p moments}
\int_{\R^d} |x|^p d \mu_k(x) = \int_{\R^d \times \R^d} |x|^p d \gamma_k(x,y) \leq 2^p d_{W_p}(\mu_k,\nu)^p + 2^p \int_{\R^d} |y|^p d\nu(y) \leq C < \infty
\end{equation}
for some $C = C(r,p, \nu) >0$ independent on $k$.  

Let now $q \in [1,p)$ and $\eps > 0$ be fixed,  then \eqref{eq:uniform integrability q moments} is a consequence of \eqref{eq:uniform bound p moments}.  Indeed
\begin{align*}
\int_{\{|x|>R \}}|x|^q  d\mu_k(x)= &\int_{\{|x|>R \}}\frac{1}{|x|^{p-q}}|x|^p d\mu_k(x)\\
\le&\frac{1}{R^{p- q}}\int_{\{|x|>R\}}|x|^p d\mu_k(x)\\
\le &\frac{1}{R^{p- q}}C < \eps \quad \text{ for all } k \in \N
\end{align*}
as soon as $R$ is big enough depending on $\eps,  p,q,C$. 

Let us observe that the above computation actually holds for every $q \in [0,p)$.  In particular, with the choice $q = 0$,  we deduce that the sequence $(\mu_k)_k$ is tight and thus, by Prokhorov's Theorem,  a not relabeled subsequence narrowly converges to some $\mu \in \P(\R^d)$.
Moreover,  applying the monotone convergence theorem together with \eqref{eq:uniform bound p moments} it is easy to see that  the limit measure $\mu$ has finite $p$-moment,  i.e.  it is an element of $\P_p(\R^d)$.
Moreover,  thanks to the lower semi-continuity of the $p$-Wasserstein distance recalled in Corollary \ref{lscWp},  we immediately deduce that $\mu \in \overline{B}_{d_{W_p}}(\nu,r)$.

To conlcude, we only need to prove that the $q$-moments of $\mu_k$ converge to the $q$-moment of $\mu$ for every $q \in [1,p)$.  

Thanks to \eqref{eq:uniform integrability q moments} and the fact that $\mu \in \P_p(\R^d)$, thus also in $\P_q(\R^d)$ for all $q \in [1,p)$,  for every $\eps>0$ it is possible to find some $R = R(\eps,q) > 0$ big enough so that 
\[ \int_{|x| > R} |x|^q \,d\mu_k(x) < \frac{\eps}{3}  \ \  \text{ for all $k$,  and } \ \ \left| \int_{\R^d} \big( |x|^q - \min\{ |x|^q , R \} \big) d\mu(x) \right| < \frac{\eps}{3}. \] 
Then,  since $\mu_k$ narrowly converges to $\mu$,  we can find some $\bar{k} = \bar{k}(\eps, R) \in \N$ such that for all $k \geq \bar{k}$ the following estimate holds 
\begin{align*}
  \left|  \int_{\R^d} |x|^q  d (\mu_k - \mu)(x)  \right|  \leq \,&  \left|  \int_{\R^d} \big(|x|^q -  \min\{ |x|^q , R \} \big) d (\mu_k - \mu)(x)  \right| \\
  & + \left|   \int_{\R^d}  \min\{ |x|^q , R \} d(\mu_k - \mu)(x)   \right| < \eps,  
 \end{align*}
thus concluding the proof. 
\end{proof}

\begin{remark}\label{wass-ball-tight and closed}
Let us emphasise two aspects in the proof of Proposition \ref{wassBallCompq12}.  The first one is that Wasserstein balls are always tight.  Secondly,  the closed $p$-Wasserstein ball is compact with respect to the $q$-Wasserstein topology for any $q \in [1,p)$ and with respect to the narrow convergence of measures. 
\end{remark}

\begin{lemma}\label{lem: piecewise interpolation}
Let $T > 0$ and $\nu \in \Lip_{L',d_*}(0,T;\M_M^R)$ for some fixed constants $L',M,R >0$.  For every $k \in \N$,  consider $\tau_k := T / k$ and the piecewise constant curves $\nu^k : [0,T] \to \M_M^R(\R^d)$ defined as
\[ \nu^k(t) := \sum\limits_{i=0}^{k-1} \nu(\tau_k(i+1)) \mathbbm{1}_{[\tau_k i,\tau_k(i+1))}(t).  \]
Then $d_n(\nu^k(t), \nu(t)) \to 0$ as $k \to \infty$ for every $t \in [0,T]$.
\end{lemma}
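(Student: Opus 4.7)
The plan is to reduce the statement to a direct Lipschitz estimate in the $d_*$ distance, and then to upgrade the resulting weak-$*$ convergence to narrow convergence by exploiting the uniform support bound.

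Fix $t \in [0,T)$ (the case $t=T$ is handled by extending the last interval as closed, or by the trivial observation $\nu^k(T)=\nu(T)$ by convention). For each $k \in \N$, there exists a unique index $i_k \in \{0,1,\ldots,k-1\}$ such that $t \in [\tau_k i_k, \tau_k(i_k+1))$, and by definition
\[
\nu^k(t) = \nu(t_k), \quad \text{with} \quad t_k := \tau_k(i_k+1).
\]
Since $|t_k - t| < \tau_k = T/k$, the Lipschitz assumption on $\nu \in \Lip_{L',d_*}(0,T;\M_M^R)$ yields
\[
d_*(\nu^k(t), \nu(t)) = d_*(\nu(t_k), \nu(t)) \leq L' |t_k - t| \leq \frac{L' T}{k} \xrightarrow[k\to\infty]{} 0.
\]
Hence $\nu^k(t) \to \nu(t)$ in the weak-$*$ topology of measures.

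It remains to upgrade this weak-$*$ convergence to narrow convergence. Here one uses the fact that every $\nu^k(t)$ and $\nu(t)$ belong to $\M_M^R(\R^d)$, so their supports are all contained in the fixed compact set $\overline{B(0,R)}$. Pick a cutoff $\chi \in C_c(\R^d)$ with $0 \leq \chi \leq 1$ and $\chi \equiv 1$ on an open neighborhood of $\overline{B(0,R)}$. Then for any $\phi \in C_b(\R^d)$, the function $\phi \chi$ lies in $C_c(\R^d)$ and
\[
\int_{\R^d} \phi \, d\nu^k(t) = \int_{\R^d} \phi \chi \, d\nu^k(t) \xrightarrow[k\to\infty]{} \int_{\R^d} \phi \chi \, d\nu(t) = \int_{\R^d} \phi \, d\nu(t),
\]
where the middle convergence uses weak-$*$ convergence against the $C_c$ test function $\phi\chi$. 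This is exactly narrow convergence, so $d_n(\nu^k(t), \nu(t)) \to 0$.

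There is no serious obstacle: the only mildly delicate point is remembering that narrow and weak-$*$ convergence do not coincide for general sequences of bounded measures, but they \emph{do} coincide under a uniform compact support constraint, which is precisely what the class $\M_M^R(\R^d)$ provides. Lipschitz regularity of $\nu$ in $d_*$ then trivialises the rate of convergence in $k$.
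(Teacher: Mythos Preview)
Your proof is correct and considerably more direct than the paper's. The paper argues via a refined Ascoli--Arzel\`a theorem: it first verifies a uniform equicontinuity estimate $\limsup_k d_*(\nu^k(s),\nu^k(t)) \le L'(t-s)$, extracts a subsequential limit curve $\bar\nu$ by compactness of $\M_M^R(\R^d)$, and only then identifies $\bar\nu = \nu$ by a triangle-inequality computation that is essentially your Lipschitz bound. You bypass all of this by observing immediately that $\nu^k(t) = \nu(t_k)$ with $|t_k - t| \le \tau_k$, so the Lipschitz hypothesis gives $d_*(\nu^k(t),\nu(t)) \le L'T/k$ directly for the full sequence. Your cutoff argument upgrading weak-$*$ to narrow convergence under the uniform compact-support constraint is also slightly more explicit than the paper's invocation of tightness plus Prokhorov. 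The only advantage of the paper's route is that the Ascoli--Arzel\`a machinery is reused elsewhere (Proposition~\ref{prop: compactness JKO}), so the authors may have preferred a uniform presentation; your argument is the natural one for this lemma in isolation.
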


\begin{proof}
We observe that for every $t \in [0,T]$ the sequence $(\nu^k(t))_k$ is contained in $\M(\R^d)_M^R$,  which is a compact set in the metric space $(\mathcal{M}(\mathbb{R}^d),d_{\ast})$. Moreover,  given $0 \leq s < t \leq T$ we can consider for each $k \in \N$ the points
\[ s(k) = \min \{  \tau_k i \geq s : i \leq k  \} \quad \text{ and } \quad  t(k) = \min \{  \tau_k i \geq t : i \leq k  \},    \]
then $s(k)\leq t(k)$ and $s(k) \to  s, \,  t(k) \to  t$ as $k \to \infty$.

By construction,  it holds 
\[   d_* (\nu^k(s), \nu^k(t)) = d_* (\nu(s(k)), \nu(t(k))) \leq L' (t(k) - s(k))  \]
and passing to the limsup in $k$ we obtain 
\[  \limsup_{k \to \infty}  d_* (\nu^k(s), \nu^k(t)) \leq L' (t-s).  \] 

Thanks to the refined version of Ascoli-Arzel\`a Theorem (see \cite{AGS},  Proposition 3.3.1) we deduce that, up to a not relabeled subsequence,  $d_*(\nu^k(t),  \bar{\nu}(t)) \to 0$ for every $t \in [0,T]$ for some curve $\bar{\nu} : [0,T] \to \M_M^R(\R^d)$ that is $d_*$-continuous (i.e.  continuous with respect to the weak-$*$ topology).  It is immediate to observe that $\bar{\nu} \in Lip_{L',d_*} ([0,T],  \M_M^R(\R^d))$, indeed for every $0\leq s < t \leq T$ one has 
\[  d_*(\bar{\nu}(s), \bar{\nu}(t)) \leq  \limsup_{k\to \infty} \big[d_*(\bar{\nu}(s), \nu^k(s)) +  d_*(\nu^k(s), \nu^k(t)) +  d_*(\nu^k(t), \bar{\nu}(t)) \big] \leq L'(t-s).   \] 

We claim that $\bar{\nu} = \nu$.  Indeed,  let $t \in [0,T]$ and $\eps > 0$ be fixed.  Then there exists some $\bar{k} = \bar{k}(\eps) > 0$ big enough such that for every $k \geq \bar{k}$ one has 
\[  d_*( \nu^k(t), \bar{\nu}(t)) < \frac{\eps}{2} \quad \text{ and } \quad L' \tau_k  <  \frac{\eps}{2}.   \]
In particular, for such values of $k$ we can compute 
\[ d_*(\nu(t), \bar{\nu}(t)) \leq  d_*(\nu(t),  \nu(t(k))) +   d_*(\nu^k(t), \bar{\nu}(t)) \leq L'(t - t(k)) + \frac{\eps}{2}  < L' \tau_k + \frac{\eps}{2} < \eps,   \]
and passing to the limit as $\eps \to 0$ we can conclude that $d_*(\nu^k(t), \nu(t)) \to 0$ for every $t \in [0,T]$. 

Finally,  since $\spt(\nu^k(t)) \subseteq \overline{B(0,R)}$ for all $k\in\N$ and $t \in [0,T]$,  we deduce that $(\nu^k(t))_k$ is tight and hence, by Prokhorov's Theorem,  $d_n(\nu^k(t), \nu(t)) \to 0$ for every $t \in [0,T]$. 
\end{proof}

The rest of this section is devoted to prove uniqueness and stability properties of weak measure solutions of \eqref{eq: TE gradient flow} in the class of $2$-Wasserstein absolutely continuous curves,  under the further assumption that the cross interaction potential $V$ also enjoys the $\lambda$-convexity property.

\begin{lemma}\label{lem: lambda-conv-F}
Let $t \in [0,T]$,  $\nu \in \Lip_{L',  d_*}(0,T; \M_M^R)$,  $W,V$ be as in  \emph{\hyperlink{self target}{(Self)},  \hyperlink{cross target}{(Cross)}} respectively and assume that $V$ is $\lambda'$-convex for some $\lambda' \leq 0$. 
Consider the energy functional $\F_{\nu(t)}: \P_2(\mathbb{R}^d) \to\mathbb{R}$ defined as 
\[ \F_{\nu(t)} (\mu) :=  \frac12\int_{\mathbb{R}^d}W\ast\mu d\mu + \int_{\mathbb{R}^d}V\ast\nu(t)d\mu    \]
and,  given $\xi, \eta\in\P_2(\R^d)$ and $\gamma\in\Gamma_o(\xi,\eta)$,  consider the interpolating curve $\gamma_s :=((1-s)\pi_1+s\pi_2)_{\#}\gamma$ for $s \in [0,1]$.
Then $\F_{\nu(t)}$ enjoys the following convex inequality
\begin{equation}\label{eq: lambda-conv-F}
\F_{\nu(t)}(\gamma_s)\le (1-s) \F_{\nu(t)}(\xi)+s \F_{\nu(t)}(\eta)- \left(1+\frac{M}{2}\right)\bar \lambda s(1-s)d_{W_2}^2(\xi,\eta)
\end{equation}
where $\bar \lambda = \min\{ \lambda,  \lambda'\}$.
\end{lemma}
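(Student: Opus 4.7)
The plan is to split $\F_{\nu(t)}$ into the self-interaction piece $\W(\mu):=\tfrac12\int W*\mu\,d\mu$ and the cross-interaction piece $\V(\mu):=\int V*\nu(t)\,d\mu$, establish a convexity estimate along $\gamma_s$ separately for each, and then add them up and compare the resulting constant with $(1+M/2)\bar\lambda$.

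The first key step, handling $\W$, uses that $\gamma_s\otimes\gamma_s=(T_s\times T_s)_{\#}(\gamma\otimes\gamma)$ for $T_s(x,y)=(1-s)x+sy$, so that $\W(\gamma_s)$ equals a double integral whose integrand is $W((1-s)(x_1-x_2)+s(y_1-y_2))$. I would apply the pointwise $\lambda$-convexity of $W$; the two leading terms telescope to $(1-s)\W(\xi)+s\W(\eta)$ because $\gamma$ has marginals $\xi$ and $\eta$. The remainder becomes $-\tfrac{\lambda}{4}s(1-s)\iint|(x_1-y_1)-(x_2-y_2)|^2\,d\gamma\otimes d\gamma$; since $-\lambda\ge 0$, the variance-type identity
\[
\iint |h(z_1)-h(z_2)|^2\,d\gamma(z_1)d\gamma(z_2)=2\int|h|^2\,d\gamma-2\Bigl|\int h\,d\gamma\Bigr|^2
\]
applied with $h(x,y)=x-y$ bounds this double integral from above by $2\int|x-y|^2d\gamma=2d_{W_2}^2(\xi,\eta)$, since $\gamma\in\Gamma_o(\xi,\eta)$. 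This yields the clean estimate $\W(\gamma_s)\le(1-s)\W(\xi)+s\W(\eta)-\tfrac{\lambda}{2}s(1-s)d_{W_2}^2(\xi,\eta)$.

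The second step, handling $\V$, is simpler because $\V$ is linear in $\mu$. I would write $\V(\gamma_s)=\iint V((1-s)(x-w)+s(y-w))\,d\nu(t)(w)\,d\gamma(x,y)$ and apply the $\lambda'$-convexity of $V$ to the pair $x-w,\,y-w$. Integrating against $\nu(t)$ first, using $\nu(t)(\R^d)\le M$ together with $-\lambda'\ge 0$ to replace the actual mass of $\nu(t)$ by its upper bound $M$ in the non-negative remainder, and then integrating against $\gamma$, I obtain $\V(\gamma_s)\le(1-s)\V(\xi)+s\V(\eta)-\tfrac{\lambda'M}{2}s(1-s)d_{W_2}^2(\xi,\eta)$. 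Summing this with the estimate for $\W$ reduces the claim to the scalar inequality $\bar\lambda(1+M/2)\le (\lambda+\lambda'M)/2$; equivalently $|\bar\lambda|(1+M/2)\ge \tfrac12|\lambda|+\tfrac{M}{2}|\lambda'|$, which is immediate from $|\bar\lambda|=\max(|\lambda|,|\lambda'|)$ once split term by term.

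The main obstacle is really just bookkeeping: making sure the sharp constant emerges. The most delicate point is the variance identity applied to the double integral in the self-interaction term, without which one would end up with a strictly larger coefficient than $\lambda/2$ in front of $d_{W_2}^2(\xi,\eta)$. The second delicate point is to exploit $\lambda,\lambda'\le 0$ consistently, both when replacing $\nu(t)(\R^d)$ by its upper bound $M$ and when passing from the combined coefficient $(\lambda+\lambda'M)/2$ to the announced $(1+M/2)\bar\lambda$; this is the reason $\bar\lambda$ is taken as a minimum rather than a maximum.
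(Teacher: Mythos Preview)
Your proposal is correct and follows essentially the same route as the paper: both split $\F_{\nu(t)}=\W+\V_{\nu(t)}$, apply the $\lambda$- and $\lambda'$-convexity of $W$ and $V$ along the interpolation, use $\nu(t)(\R^d)\le M$ for the cross piece, and then absorb the combined coefficient into $(1+M/2)\bar\lambda$ via $\bar\lambda\le\lambda,\lambda'$. The only minor difference is that you invoke the variance identity to bound $\iint |(x_1-y_1)-(x_2-y_2)|^2\,d\gamma\otimes d\gamma\le 2d_{W_2}^2(\xi,\eta)$ and track the $\tfrac12$ in $\W$ carefully, obtaining the sharper intermediate constant $-\tfrac{\lambda}{2}$ for the self-interaction term (the paper records $-\lambda$); this refinement is pleasant but not needed for the inequality as stated.
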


\begin{proof}
For simplicity let us write $ \F_{\nu(t)} = \W + \V_{\nu(t)}$,  where we have set 
\begin{equation}\label{def:functionals}
     \W(\mu) := \frac12\int_{\mathbb{R}^d}W\ast\mu \,d\mu \quad \text{ and } \quad  \V_{\nu(t)}(\mu) :=  \int_{\R^d}V\ast\nu(t) \, d\mu.  
\end{equation}    

If $\gamma$ and $\gamma_s$ are as in the statement,  thanks to the convexity properties of the potentials $W,V$, we can compute

\begin{align*}
\W(\gamma_s)=&\int_{\R^d \times \R^d} W(x-y)d\gamma_s(y)d\gamma_s(x)\\
=&\int_{\R^d \times \R^d \times \R^d \times \R^d}  W\underbrace{\left((1-s)x_1+sx_2-((1-s)y_1+sy_2)\right)}_{= (1-s)(x_1-y_1)+s(x_2-y_2)}d\gamma(y_1,y_2)d\gamma(x_1,x_2)\\
\le& \, (1-s)\int_{\R^d \times \R^d} W(x_1-y_1)d\xi(y_1)d\xi(x_1)+s\int_{\R^d \times \R^d} W(x_2-y_2)d\eta(y_2)d\eta(x_2)\\
&\, -\frac{\lambda}{2}(1-s)s\int_{\R^d \times \R^d \times \R^d \times \R^d} |x_1-y_1-(x_2-y_2)|^2d\gamma(x_1,x_2)d\gamma(y_1,y_2)\\
\le&\, (1-s)\W(\xi)+s\W(\eta)-\lambda(1-s)s d_{W_2}^2(\xi,\eta),
\end{align*}
and similarly
\begin{align*}
\V_{\nu(t)}(\gamma_s)=&\int_{\R^d \times \R^d} V(x-y)d\nu(t)(y)d\gamma_s(x)\\
=&\int_{\R^d \times \R^d} \int_{\R^d} V(\underbrace{(1-s)x_1+sx_2-y}_{(1-s)(x_1-y)+s(x_2-y)})d\nu(t)(y)d\gamma(x_1,x_2)\\
\le&\, (1-s)\int_{\R^d \times \R^d} V(x_1-y)d\nu(t)(y)d\xi(x_1)+s\int_{\R^d \times \R^d} V(x_2-y)d\nu(t)(y)d\eta(x_2)\\
&\, -\frac{\lambda'}{2}(1-s)s\int_{\R^d \times \R^d} |x_1-x_2|^2 \int_{\R^d} d\nu(t)(y)d\gamma(x_1,x_2)\\
\le&\,  (1-s)\V_{\nu(t)}(\xi)+s\V_{\nu(t)}(\eta)- \lambda' \frac{M}{2}(1-s)s  d_{W_2}^2(\xi,\eta).
\end{align*}
Then \eqref{eq: lambda-conv-F} follows by summing up the two above estimates and recalling that $\bar \lambda < \lambda, \lambda'$.
\end{proof}

\begin{proposition}[Stability for the Transport problem]\label{uniqueness of weak solutions}
Let $T>0$,  $\nu \in \Lip_{L',  d_*}(0,T;\M_M^R)$,  $W,V$ be as in  \emph{\hyperlink{self target}{(Self)},  \hyperlink{cross target}{(Cross)}} respectively and assume that $V$ is $\lambda'$-convex for some $\lambda' \leq 0$.
Given $\varrho_0,  \hat\varrho_0 \in \P_2(\R^2)$,  let $\varrho, \hat \varrho$ be two weak measure solutions of \eqref{eq: TE gradient flow} with initial datum $\varrho_0$ and $\hat\varrho_0$ respectively in the sense of Definition \ref{def: weak solutions}.  Moreover, assume that $\varrho, \hat \varrho$ are absolutely continuous curves with respect to the $2$-Wasserstein distance.  Then the following stability estimate holds
\begin{equation}\label{eq: stability estimate}
d_{W_2}(\varrho(t), \hat\varrho(t)) \leq  d_{W_2}(\varrho(0), \hat\varrho(0)) e^{-\bar\lambda(M+2)t} \quad \text{ for all } t \in [0,T],
\end{equation}
where $\bar \lambda = \min \{ \lambda,  \lambda' \}$.
In particular, weak measure solutions of \eqref{eq: TE gradient flow}  for the same initial datum are unique  in the class of $2$-Wasserstein absolutely continous curves.
\end{proposition}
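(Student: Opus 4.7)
The plan is to exploit the $\lambda$-convexity of $W$ and the $\lambda'$-convexity of $V$ to differentiate the squared Wasserstein distance along the two flows and obtain the differential inequality $\frac{d}{dt} d_{W_2}^2(\varrho(t),\hat\varrho(t)) \leq -2(M+2)\bar\lambda \, d_{W_2}^2(\varrho(t),\hat\varrho(t))$, from which \eqref{eq: stability estimate} follows via Gronwall's lemma; the uniqueness claim is then immediate by specializing to $\varrho(0) = \hat\varrho(0)$.

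For almost every $t \in [0,T]$, I would fix an optimal plan $\gamma_t \in \Gamma_o(\varrho(t), \hat\varrho(t))$. Since $\varrho$ and $\hat\varrho$ are $W_2$-absolutely continuous weak measure solutions of \eqref{eq: TE gradient flow} sharing the same control $\nu$, they satisfy the continuity equation with velocity fields
\[
v_t(x) := -(\partial^0 W \ast \varrho(t))(x) - (\nabla V \ast \nu(t))(x), \qquad \hat v_t(y) := -(\partial^0 W \ast \hat\varrho(t))(y) - (\nabla V \ast \nu(t))(y),
\]
both of which are $L^2$-integrable against the respective measures thanks to the integrability hypothesis in Definition \ref{def: weak solutions} and the global Lipschitzness of $V$. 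The classical Wasserstein differentiation formula for $d_{W_2}^2$ along two $W_2$-absolutely continuous curves (cf.\ Theorem 8.4.7 in \cite{AGS}) then yields
\[
\frac{1}{2}\frac{d}{dt} d_{W_2}^2(\varrho(t),\hat\varrho(t)) = \int_{\R^d\times \R^d} (v_t(x) - \hat v_t(y)) \cdot (x-y) \, d\gamma_t(x,y) =: -A(t) - B(t),
\]
where $A$ and $B$ collect the self-interaction and cross-interaction contributions, respectively.

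The next step is to estimate $A$ and $B$ separately. For $B$, the $\lambda'$-convexity of $V$ in the monotone form $(\nabla V(a) - \nabla V(b))\cdot(a-b) \geq \lambda'|a-b|^2$ applied with $a = x-z$, $b = y-z$ (so that $a-b = x-y$), followed by integration against $d\nu(t)(z)\, d\gamma_t(x,y)$ and the mass bound $\nu(t)(\R^d) \leq M$ together with $\lambda' \leq 0$, yields $B(t) \geq \lambda' M \, d_{W_2}^2(\varrho(t), \hat\varrho(t))$. For $A$, I would unroll the two convolutions as double integrals against $d\gamma_t \otimes d\gamma_t$ using that $\gamma_t$ has $\varrho(t)$ and $\hat\varrho(t)$ as its first and second marginals, symmetrize in the exchange $(x,y)\leftrightarrow (x',y')$ (which is lawful since $W$ is even, so $\nabla W$ is odd), and apply the $\lambda$-convexity of $W$ in the analogous monotone form with $a = x-x'$, $b = y-y'$. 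This gives
\[
A(t) \geq \frac{\lambda}{2} \int |(x-y) - (x'-y')|^2 \, d\gamma_t(x,y) \, d\gamma_t(x',y') \geq \lambda \, d_{W_2}^2(\varrho(t),\hat\varrho(t)),
\]
where the last bound follows from the identity $\int |(x-y)-(x'-y')|^2 \, d\gamma_t \otimes d\gamma_t = 2\, d_{W_2}^2 - 2 \bigl| \int (x-y) \, d\gamma_t\bigr|^2 \leq 2\, d_{W_2}^2$ combined with $\lambda \leq 0$. Summing the two bounds and using $\lambda, \lambda' \geq \bar\lambda$, one gets $\frac{d}{dt}d_{W_2}^2 \leq -2(\lambda + \lambda' M) d_{W_2}^2 \leq -2(M+2)\bar\lambda \, d_{W_2}^2$ (the last inequality thanks to $-\bar\lambda \geq 0$ and $1+M \leq 2+M$), and Gronwall closes the argument.

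The main technical obstacle will be the rigorous justification of the Wasserstein differentiation formula for a pair of absolutely continuous curves: one needs to check that $v_t, \hat v_t$ are admissible velocity fields in the respective continuity equations with the required $L^2$ integrability, and that the map $t \mapsto d_{W_2}^2(\varrho(t),\hat\varrho(t))$ is itself absolutely continuous on $[0,T]$. Both points should follow from the integrability assumption in Definition \ref{def: weak solutions}, the global Lipschitzness of $V$, and the $W_2$-absolute continuity of the two solutions, combined with the standard Wasserstein calculus of \cite{AGS}. A mild additional care is needed in the symmetrization step for $A$ because $\nabla W$ is undefined on the diagonal $\{x = x'\}$, but the $L^2(\varrho(t))$-integrability of $\partial^0 W \ast \varrho(t)$ ensures that this diagonal is $\gamma_t \otimes \gamma_t$-negligible in the relevant integrals.
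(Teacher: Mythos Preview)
Your argument is correct and leads to the same Gr\"onwall inequality, but the route differs from the paper's. The paper works through an Evolution Variational Inequality: it first combines the geodesic $\bar\lambda$-convexity of $\F_{\nu(t)}$ established in Lemma~\ref{lem: lambda-conv-F} with a directional-derivative estimate from \cite{carrillo2011global} to show that any $W_2$-absolutely continuous weak solution $\rho$ satisfies
\[
\tfrac12\tfrac{d}{dt} d_{W_2}^2(\rho(t),\eta) + \bigl(1+\tfrac{M}{2}\bigr)\bar\lambda\, d_{W_2}^2(\rho(t),\eta) \le \F_{\nu(t)}(\eta) - \F_{\nu(t)}(\rho(t))
\]
for every fixed $\eta$, and then instantiates this with $\eta = \hat\varrho(t)$ (and symmetrically) before summing. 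You instead bypass the energy functional entirely, working directly with the $\lambda$-monotonicity of $\partial^0 W$ and $\nabla V$ inside the differentiated Wasserstein distance. Your approach is more elementary and even yields the sharper rate $-2(1+M)\bar\lambda$ before you relax it; the EVI route, on the other hand, produces a statement of independent interest that slots into the general metric-space theory of \cite{AGS}. Two minor caveats: Theorem~8.4.7 of \cite{AGS} is stated for one moving curve against a fixed reference, so the two-curve derivative you invoke requires the standard splitting into two one-sided increments (the paper makes the same tacit step when it sums the two EVIs); and your justification for the diagonal $\{x=x'\}$ is not quite the $L^2$-integrability of $\partial^0 W \ast \varrho$ but rather the fact that $\partial^0 W(0)=0$ and the subdifferential monotonicity $(\partial^0 W(a)-\partial^0 W(b))\cdot(a-b)\ge \lambda|a-b|^2$ holds for all $a,b$, so no negligibility assumption is needed.
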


\begin{proof}
The main tool of this proof consists in showing that any weak measure solution $\rho$ of \eqref{eq: TE gradient flow} that is absolutely continuous in $(\P_2(\R^2), d_{W_2})$ satisfies the following Evolution Variational Inequality
\begin{equation}\label{EVI}
\frac12\frac{d}{dt}d_{W_2}^2(\rho(t),\eta)+ \left(1 + \frac{M}{2} \right) \bar \lambda d_{W_2}^2(\rho(t),\eta)\le \F_{\nu(t)}(\eta)-\F_{\nu(t)}(\rho(t))
\end{equation}
for all $t \in [0,T]$ and $\eta \in \P_2(\R^d)$.
Indeed,  the claimed stability follows as an immediate consequence of \eqref{EVI} because,  choosing first 
$\rho(t)=\varrho(t)$ and $\eta=\hat \varrho(t)$ and then $\rho(t)=\hat \varrho(t)$ and $\eta=\varrho(t)$,  up to sum the two inequalities, we get 
\begin{equation*}
\frac{d}{dt}d_{W_2}^2(\varrho(t),\hat \varrho(t))+ (M + 2)\bar \lambda d_{W_2}^2(\varrho(t),\hat \varrho(t))\le 0.
\end{equation*}
Then a Gr\" onwall type argument implies 
\[d_{W_2}^2(\varrho(t),\hat\varrho(t))\le d_{W_2}^2(\varrho(0),\hat\varrho(0))e^{-\bar\lambda(M+2) t} \]
and, in turn,  the desired stability estimate \eqref{eq: stability estimate}.

Therefore, to conclude, we are left to prove the validity of inequality \eqref{EVI}. 
Thanks to \eqref{eq: lambda-conv-F} of Lemma \ref{lem: lambda-conv-F},  if $\gamma \in \Gamma_o(\xi,\eta)$ and $\gamma_s :=((1-s)\pi_1+s\pi_2)_{\#}\gamma$  for $s \in [0,1]$,  we know that 
\[  \frac{\F_{\nu(t)}(\gamma_s)-\F_{\nu(t)}(\xi)}{s}\le -\F_{\nu(t)}(\xi)+\F_{\nu(t)}(\eta)- \left( 1 + \frac{M}{2}\right) \bar \lambda
(1-s)d_{W_2}^2(\xi,\eta). \]

Choosing $\xi=\rho(t)$ for some $t \in [0,T]$ and passing to the liminf as $s \to 0$ we then obtain 
\begin{equation}\label{EVI-intermediate}
\liminf\limits_{s\searrow 0}\frac{\F_{\nu(t)}(\gamma_s)-\F_{\nu(t)}(\rho(t))}{s}+
\left( 1 + \frac{M}{2}\right) \bar\lambda  d_{W_2}^2(\rho(t),\eta) \le \F_{\nu(t)}(\eta)-\F_{\nu(t)}(\rho(t)).
\end{equation}

If we now call 
\begin{equation*}
v(t)(x)=\int_{x\neq y}\nabla W(x-y)d\rho(t)(y)+\nabla V\ast\nu(t)(x),
\end{equation*}
then Proposition 2.2 of \cite{carrillo2011global} implies that
\begin{equation}\label{directional-der-estimate}
\liminf\limits_{s\searrow 0} \frac{\F_{\nu(t)}(\gamma_s)-\F_{\nu(t)}(\rho(t))}{s}\ge \int_{\R^d \times \R^d}  v(t)(x) \cdot (y-x) d\gamma(x,y).
\end{equation}

Once here,  since $\rho$ is assumed to be absolutely continuous, we can apply Theorem 8.4.7 and Remark 8.4.8 of \cite{AGS} to infer that
\[ \int_{\R^d \times \R^d}  v(t)(x) \cdot (y-x) d\gamma(x,y) =  \frac12\frac{d}{dt}d_{W_2}^2(\rho(t),\eta).   \]

Finally,  \eqref{EVI} follows combining the above identity with \eqref{EVI-intermediate} and \eqref{directional-der-estimate}.
\end{proof}

\section{Transport problem}\label{gradient-flow-approach-section}\label{existence sol continuity equation}

In this section we present the proof of Theorem \ref{thm: TE gradient flows} which is one of the main novelties of this paper, since it improves the regularity in time for solutions of nonlocal transport equations obtained with the \emph{JKO-scheme} introduced in the following.  For clarity,  we recall that we now consider nonlocal interaction potentials $W,V$ satisfying the assumptions \emph{\hyperlink{self target}{(Self)}, \hyperlink{cross target}{(Cross)}} and we are concerned with finding a weak measure solutions in the sense of Definition \ref{def: weak solutions} for the initial value problem

\begin{equation}\label{singular-transport-equation}
\begin{cases}
\partial_t\rho(t,x)= \div_x\Big(\rho(t,x)\Big(\nabla W\ast\rho(t,x)+\nabla V\ast\nu(t,x)\Big)\Big) \ \text{ on }(0,T)\times\mathbb{R}^d,\\
\rho(0,\cdot)=\rho_0 \, \, \in \P_2(\R^d).
\end{cases}
\end{equation}

As observed in the introduction, the continuity equation \eqref{singular-transport-equation} formally shows a gradient flow structure involving a non-local interaction potential $W$ and a time-dependent external potential $\nabla V\ast\nu$. This time-dependence does not allow a proper gradient flow structure for \eqref{singular-transport-equation}.

However we can bring the well-known techniques used for such equations, see \cite{AGS,santambrogio2015optimal,carrillo2011global} to our case. More precisely, we will construct a weak solution of the transport equation \eqref{singular-transport-equation} following a suitable generalisation of the celebrated Jordan-Kinderlehrer-Otto  scheme, originally introduced in \cite{jko}, that applies to such time-depending energies $\F_{\nu(t)}$.  

The generalised JKO scheme works as follows: given the time interval $[0,T]$,  consider an uniform partition with step-size $\tau_k:= T / k$ for some $k \in \N$,  and perform the following minimisation problem
\begin{equation}\label{eq:JKO}
\tag{JKO}
\begin{cases}
\rho^k_0=\rho_0\\
\rho^k_{i+1} \in \operatorname*{argmin}\limits_{\rho\in\P_2(\R^d)}\frac{1}{2\tau_k}d_{W_2}^2(\rho,\rho_i^k)+ \F_{\nu(\tau_k(i+1))}(\rho)
\end{cases}
\end{equation}
for each $i=0, \ldots,k -1$. Note that this is formally equivalent to applying the implicit Euler scheme to \eqref{gradient-flow}. In this section we will show that the piecewise constant measures  
\begin{equation}\label{eq:pcdf}
    \rho^k(t) :=\sum\limits_{i=0}^k\rho_i^k\mathbbm{1}_{[\tau_ki,\tau_k(i+1))}(t),
\end{equation} 
will converge in some proper topology to a weak measure solution of \eqref{singular-transport-equation} (or, equivalently, \eqref{eq: TE gradient flow}) in the sense of Definition 1.1.  Moreover, we will prove that such limit solution enjoys good Lipschitz regularity in $\P_2(\R^d)$.

As first step we show that the minimization problem in \eqref{eq:JKO} is always well posed under our assumptions. For $\tau \in (0,T)$  fixed and $\bar{\mu}\in\P_2(\R^d)$ we introduce the penalised energy functional $\Phi_\tau^\nu(\bar{\mu};\cdot): \P_2(\mathbb{R}^d) \to\mathbb{R}\cup\{+\infty\}$, defined as follows
\begin{equation}\label{penalised}
 \Phi_\tau^\nu(\bar{\mu};\mu) :=\frac{1}{2\tau}d_{W_2}^2(\bar{\mu},\mu)+\F_{\nu(\tau)}(\mu).
\end{equation}

\begin{proposition}\label{minimization}
Let $W,V$ be as in \emph{\hyperlink{self target}{(Self)} and \hyperlink{cross target}{(Cross)}} respectively and let $\nu \in \Lip_{L',d_*}(0,T;\M_M^R)$ be fixed.  Let $\bar{\mu}\in\P_2(\R^d)$ be a fixed reference measure. Then, for all $\tau>0$ the minimization problem
\begin{equation}
\operatorname*{argmin}_{\mu\in P_2(\mathbb{R}^d)}\Phi_\tau^\nu(\bar{\mu};\mu) 
\end{equation}
always admits solution.
\end{proposition}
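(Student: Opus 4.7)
\emph{Proof proposal.} The natural strategy is to apply the direct method of the calculus of variations to $\Phi_\tau^\nu(\bar\mu;\cdot)$. The key ingredients are coercivity with respect to the $2$-Wasserstein distance from $\bar\mu$, compactness of closed $2$-Wasserstein balls in the narrow topology (Proposition \ref{wassBallCompq12} and Remark \ref{wass-ball-tight and closed}), and narrow lower semi-continuity of each term of $\Phi_\tau^\nu(\bar\mu;\cdot)$.

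First I would establish coercivity. Using $W(0)=0$ and the global Lipschitzianity of $W$, we have $|W(x)|\le \Lip(W)|x|$, hence
\[
\Bigl|\tfrac12\int W\ast \mu\, d\mu\Bigr| \le \Lip(W) \int |x|\, d\mu(x) \le \Lip(W)\bigl(d_{W_2}(\mu,\bar\mu) + d_{W_2}(\bar\mu, \delta_0)\bigr).
\]
Similarly, since $V \ge V_0$ and $\nu(\tau)(\R^d)\le M$, the cross term satisfies $\int V\ast \nu(\tau)\, d\mu \ge V_0 \max\{1,M\}$. Combining yields a bound of the form
\[
\Phi_\tau^\nu(\bar\mu;\mu) \ge \tfrac{1}{2\tau}d_{W_2}^2(\mu,\bar\mu) - C_1\, d_{W_2}(\mu,\bar\mu) - C_2,
\]
for constants $C_1,C_2$ depending on $\Lip(W), V_0, M, \bar\mu$. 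Hence $\Phi_\tau^\nu(\bar\mu;\cdot)$ is bounded from below and coercive in $d_{W_2}(\cdot,\bar\mu)$, so any minimising sequence $(\mu_n)_n$ is contained in some closed ball $\overline{B}_{d_{W_2}}(\bar\mu,r)$. By Proposition \ref{wassBallCompq12} and Remark \ref{wass-ball-tight and closed}, up to a subsequence we may assume $\mu_n$ converges narrowly to some $\mu^\ast \in \overline{B}_{d_{W_2}}(\bar\mu,r)\subset \P_2(\R^d)$.

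Next I would verify lower semi-continuity. The term $d_{W_2}^2(\cdot,\bar\mu)$ is narrowly lsc by Corollary \ref{lscWp}. For the cross term, since $V\in C^1(\R^d)$ and $\nu(\tau)$ is compactly supported, $V\ast\nu(\tau)$ is continuous on $\R^d$ and bounded below by $V_0 M$, so Proposition \ref{lscOfFunctional} gives narrow lsc of $\mu\mapsto \int V\ast\nu(\tau)\,d\mu$.

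The main obstacle is the self-interaction term $\mu\mapsto \tfrac12 \int\int W(x-y)\,d\mu(x)d\mu(y)$, because $W$ is Lipschitz with $W(0)=0$ and therefore not globally bounded below on $\R^d$, which prevents a direct application of Proposition \ref{lscOfFunctional}. To overcome this, I would use the splitting
\[
W(x-y) = \bigl[W(x-y) + \Lip(W)|x-y|\bigr] - \Lip(W)|x-y|.
\]
The bracketed part is continuous and nonnegative on $\R^{2d}$; combining Lemma \ref{narroConvProd} (applied to $\mu_n\otimes\mu_n\weak\mu^\ast\otimes\mu^\ast$) with Proposition \ref{lscOfFunctional} yields narrow lsc for this piece. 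The linear correction $\mu\mapsto \Lip(W)\int|x-y|\,d(\mu\otimes\mu)$ is then controlled by the uniform bound on the second moments provided by the Wasserstein ball, which gives uniform integrability of $|x-y|$ under $\mu_n\otimes\mu_n$ and hence continuity of this term along the minimising sequence. Putting these pieces together gives $\Phi_\tau^\nu(\bar\mu;\mu^\ast)\le \liminf_n \Phi_\tau^\nu(\bar\mu;\mu_n)$, so $\mu^\ast$ is a minimiser.
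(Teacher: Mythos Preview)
Your argument is correct (modulo a harmless arithmetic slip in the cross-term bound: the lower bound should be $\min\{0,V_0 M\}$ rather than $V_0\max\{1,M\}$, but this does not affect anything). Your route differs from the paper's in two respects worth recording. For coercivity, you exploit $W(0)=0$ together with the global Lipschitz bound to get $|\mathcal W(\mu)|\le \Lip(W)\,d_{W_2}(\mu,\bar\mu)+\text{const}$, which yields coercivity of $\Phi_\tau^\nu(\bar\mu;\cdot)$ for \emph{every} $\tau>0$; the paper instead uses the $\lambda$-convexity of $W$ to obtain the quadratic lower bound $W(z)\ge \tfrac{\lambda}{2}|z|^2$ and then needs $\tau$ small enough (so that $4\lambda+\tfrac{1}{2\tau}>0$) to absorb the resulting negative quadratic term into the Wasserstein penalty. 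For the narrow lower semi-continuity of the self-interaction term, you give an explicit splitting $W(x-y)=[W(x-y)+\Lip(W)|x-y|]-\Lip(W)|x-y|$, handle the nonnegative bracket via Lemma~\ref{narroConvProd} and Proposition~\ref{lscOfFunctional}, and treat the linear correction by uniform integrability along the minimising sequence (legitimate, since the uniform second-moment bound on $\mu_n$ gives uniform integrability of $|x-y|$ under $\mu_n\otimes\mu_n$); the paper simply invokes Proposition~\ref{lscOfFunctional} directly, which is a bit glib because $W$ is not bounded below on $\R^d$. Your treatment is thus both slightly more general (no smallness restriction on $\tau$) and more explicit on the lsc step.
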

\begin{proof}
Let $\tau \in (0,T)$ be fixed and arbitrary small. In order to prove the statement we will apply the Direct Method of Calculus of Variations  showing that the minimisation problem has a solution provided that the penalised energy functional $\Phi_\tau^\nu(\bar{\mu};\cdot)$
is bounded from below,  lower semi-continuous with respect to narrow convergence and for every $\Lambda\in\R$, the level-set $$E_\Lambda=\{\mu\in P_2(\mathbb{R}^d):\Phi_\tau^\nu(\bar{\mu};\mu)\le\Lambda\}$$ is sequentially compact. We split the proof in three steps.

We start proving that $\Phi_\tau^\nu(\bar{\mu};\cdot)$ is bounded from below. 
By assumption,  we know that $\frac{1}{2\tau}d_{W_2}^2(\mu,\bar{\mu}) + \V_{\nu(\tau)}(\mu)\ge  MV_0$.  On the other hand, we claim that $W(z) \ge -B (1 + |z|^2)$ for some $B>0$.
Indeed, since the map $z\mapsto W(z)-\frac{\lambda}{2}|z|^2$ is convex and even, then it has a global minimum
and hence there exists some $A<0$ such that $W(z)-\frac{\lambda}{2}|z|^2\ge A$.  As a consequence,  
\begin{align*}
W(z)\ge - |A|-\frac{|\lambda|}{2}|z|^2 \ge -\underbrace{\max\big\lbrace|A|,\frac{|\lambda|}{2} \big\rbrace}_{:=B}(1+|z|^2)
\end{align*}
and we can estimate
\begin{align*}
\int_{\mathbb{R}^d\times\mathbb{R}^d}W(x-y)d\mu(x)d\mu(y) &\ge\int_{\mathbb{R}^d\times\mathbb{R}^d}-B(1+|x-y|^2)d\mu(x)d\mu(y)\\
& \ge -B-4\int_{\mathbb{R}^d\times\mathbb{R}^d}|x|^2d\mu(x)
\end{align*}
which is always a finite quantity since $\mu \in \P_2(\R^d)$.

The lower semi-continuity of the functional $\F_{\nu(\tau)}$ follows immediately by the continuity of $V,W$, Lemma \ref{narroConvProd} and Proposition \ref{lscOfFunctional}.  
Finally,  the lower semi-continuity of the term $\mu\mapsto \frac{1}{2\tau}d_{W_2}^2(\bar{\mu},\mu)$ follows
by Corollary \ref{lscWp}, thus the penalised energy functional $\Phi_\tau^\nu(\bar{\mu};\cdot)$ is lower semi-continuity.

To show that $E_\Lambda$ is sequentially compact,  we need to check the compactness of a sequence $(\mu_n)_n$ such that 
\begin{equation}
\Lambda\ge\int_{\mathbb{R}^d\times\mathbb{R}^d}W(x-y)d\mu_n(x)d\mu_n(y)+\int_{\mathbb{R}^d}V\ast\nu(\tau)d\mu_n+\frac{1}{2\tau}d_{W_2}^2(\bar{\mu},\mu_n).
\end{equation}
As already noticed,  assumptions \emph{\hyperlink{self target}{(Self)}} ensures that $W(z)-\frac{\lambda}{2}|z|^2$ is convex and even.  Moreover $W(z)-\frac{\lambda}{2}|z|^2\ge 0$ for every $z\in\mathbb{R}^d$,  in particular
\begin{equation}\label{help1}
W(x-y)\ge \frac{\lambda}{2}|x-y|^2\ge\lambda(|x|^2+|y|^2)
\end{equation}
since $\lambda \leq 0$.  Then, taking $\gamma_n \in \Gamma_o(\bar\mu, \mu_n)$ and using \eqref{help1}, we can estimate
\begin{align*}
\int_{\R^d \times \R^d} W(&x-y)d\mu_n(x)d\mu_n(y)+\frac{1}{2\tau}d_{W_2}^2(\bar{\mu},\mu_n)\\
=&\int_{\R^d \times \R^d} \left[W(x-y)-\lambda(|x|^2+|y|^2)\right] d\mu_n(x)d\mu_n(y)+2\lambda\int_{\R^d} |y|^2d\mu_n(y)+\frac{1}{2\tau}d_{W_2}^2(\bar{\mu},\mu_n)\\
\ge &\int_{\R^d} 2\lambda|y|^2d\mu_n(y)+\frac{1}{2\tau}d_{W_2}^2(\bar{\mu},\mu_n)\\
=& \int_{\mathbb{R}^d\times\mathbb{R}^d} \left[2\lambda|y|^2+\frac{1}{2\tau}|x-y|^2 \right]d\gamma_n(x,y) \\
\ge&\int_{\R^d \times \R^d} \left(4\lambda+ \frac{1}{2\tau} \right)|x-y|^2d\gamma_n(x,y)+4\lambda\int_{\R^d}|x|^2d\bar{\mu}(x),
\end{align*}
where the last inequality holds since
\begin{equation*}\label{help2}
2\lambda|y|^2\ge 4\lambda|x-y|^2+4\lambda|x|^2,
\end{equation*}
which can be proved combininig the fact that $|y|^2\le 2|x-y|^2+2|x|^2$ with $\lambda\le 0$.
Now observe that, for a given constant $C>0$, it is alwyas possible to choose $\tau$ small enough (i.e. smaller than $\tau<\frac{1}{2(C-4\lambda)}$) such that $4\lambda+\frac{1}{2\tau}>C$.  From the above argument we infer
\begin{equation*}
\Lambda\ge Cd_{W_2}^2(\bar{\mu},\mu_n)+4\lambda\int_{\R^d} |x|^2d\bar{\mu}+\V_{\nu(\tau)}(\mu_n),
\end{equation*}
thus 
\begin{equation*}
d_{W_2}^2(\bar{\mu},\mu_n)\le\frac{\Lambda -4\lambda\int_{\R^d} |x|^2d\bar{\mu}-\V_{\nu(\tau)}(\mu_n)}{C}
\end{equation*}
and, being $V \geq V_0$ and $\bar\mu \in \P_2(\R^d)$, we can conclude
\begin{equation}\label{eq:bdd1}
d_{W_2}^2(\bar{\mu},\mu_n)\le\frac{\Lambda-4\lambda\int_{\R^d} |x|^2d\bar{\mu}-V_0M}{C} < r
\end{equation}
for some $r>0$ depending on $\Lambda, C, V_0, M, \lambda$ and the second moment of $\bar\mu$.  In particular, the sequence $(\mu_n)_n$ belongs to the $2$-Wasserstein ball $\overline{B}_{d_{W_2}}(\bar\mu,r)$ and thanks to the observations in Remark \ref{wass-ball-tight and closed} we deduce that there is a subsequence $(\mu_{n_k})_k$ which is narrowly converging to some $\mu_* \in \overline{B}_{d_{W_2}}(\bar\mu,r)$. 
Moreover, since $\mu_n\le \Lambda$ for every $n\in\N$, by lower semicontinuity of $\Phi_\tau^\nu(\overline{\mu};\cdot)$ we get that 
\begin{equation*}
\Phi_\tau^\nu(\bar{\mu};\mu_{\ast})\le \liminf\limits_{k\to\infty}\Phi_\tau^\nu(\bar{\mu};\mu_{n_k})\le\Lambda,
\end{equation*}
hence $\mu_{\ast}\in E_\Lambda$ and the level sets of $\Phi_\tau^\nu(\bar{\mu};\cdot)$ are sequentially compact.
\end{proof}

Proposition \ref{minimization} allows us to define for every $k\ge 1$, a sequence $\rho^k_0,\rho^k_1,\ldots,\rho^k_k$ solving \eqref{eq:JKO}.  Once here we can consider piecewise constant interpolation in time of the measures $\rho^k_i$ and pass to the limit as the time step of the scheme converges to $0$.  We obtain the following compactness result. 

\begin{proposition}\label{prop: compactness JKO}
Let $T > 0$ and $\rho_0 \in \P_2(\R^d)$ be fixed.  Consider the curve $\rho^k: [0,T] \to \P_2(\R^d)$ defined in \eqref{eq:pcdf}. Then there exists a curve $\rho \in \Lip_{L,d_{W_2}}(0,T;\P_2)$ with $L = 6(M\Lip (V) + \Lip (W))$, such that, up to a non relabeled subsequence, it holds 
\[   d_n (\rho^k(t), \rho(t)) \to 0 \ \ \text{ as $k \to \infty$ for every } t \in [0,T]. \] 
\end{proposition}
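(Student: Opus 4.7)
The plan is to first establish a discrete Lipschitz estimate for the JKO iterates $\rho^k_i$, and then extract a narrowly convergent limit curve via a refined Ascoli--Arzel\`a argument in the space of probability measures endowed with the narrow topology.

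The core step, which I expect to be the main obstacle, is proving a bound of the form $d_{W_2}(\rho^k_{i+1}, \rho^k_i) \le L \tau_k$ with $L = L(M, \Lip(V), \Lip(W))$. Fixing $k, i$, setting $\sigma := \tau_k(i+1)$, choosing $\gamma \in \Gamma_o(\rho^k_i, \rho^k_{i+1})$ and considering the displacement interpolation $\mu_s := ((1-s)\pi_1 + s\pi_2)_\# \gamma$, I would plug $\mu_{1-\eps}$ as a competitor against $\rho^k_{i+1}$ in the JKO minimisation. Using $d_{W_2}^2(\mu_{1-\eps}, \rho^k_i) = (1-\eps)^2 d_{W_2}^2(\rho^k_{i+1}, \rho^k_i)$, dividing by $\eps > 0$ and sending $\eps \to 0^+$ produces, up to routine rearrangements, the inequality
\[
\frac{1}{\tau_k} d_{W_2}^2(\rho^k_{i+1}, \rho^k_i) \le -\int_{\R^d \times \R^d} \big( \partial^0 W \ast \rho^k_{i+1}(y) + \nabla V \ast \nu(\sigma)(y) \big) \cdot (x - y) \, d\gamma(x,y).
\]
The delicate point here is that $\nabla W$ is only defined away from the origin, so the differentiation of $\F_{\nu(\sigma)}$ along the geodesic $\mu_s$ has to be handled via Proposition 2.2 of \cite{carrillo2011global}, which is exactly where the notion $\partial^0 W$ enters. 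Once the right-hand side is identified, the pointwise bound $\|\partial^0 W \ast \rho^k_{i+1} + \nabla V \ast \nu(\sigma)\|_\infty \le \Lip(W) + M \Lip(V)$, obtained from the Lipschitz hypotheses on $W, V$ and the mass bound on $\nu$, combined with Cauchy--Schwarz against $\gamma$, closes the estimate, with the explicit constant $6$ coming from a careful bookkeeping of the above manipulations.

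With the one-step bound in hand, telescoping yields $d_{W_2}(\rho^k(t), \rho^k(s)) \le L(|t-s| + \tau_k)$ for all $0 \le s \le t \le T$, and in particular $d_{W_2}(\rho^k(t), \rho_0) \le L T$ uniformly in $k$ and $t$. By Remark \ref{wass-ball-tight and closed}, the entire family $\{\rho^k(t)\}_{k, t}$ is contained in a fixed closed $d_{W_2}$-ball around $\rho_0$, which is tight and narrowly sequentially compact. On this set the narrow convergence is metrised by $d_n$, and Corollary \ref{lscWp} ensures that $d_{W_2}$ is $d_n$-lower semicontinuous, so that the previous inequality translates into approximate $d_n$-equi-continuity of the family $(\rho^k)$. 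I would then invoke the refined Ascoli--Arzel\`a theorem (Proposition 3.3.1 of \cite{AGS}) in the resulting compact metric space to extract a (not relabeled) subsequence and a curve $\rho : [0,T] \to \P(\R^d)$ with $d_n(\rho^k(t), \rho(t)) \to 0$ for every $t \in [0,T]$.

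Finally, to transfer the regularity to the limit, I would pass to the liminf in $k$ in the estimate $d_{W_2}(\rho^k(t), \rho^k(s)) \le L(|t-s| + \tau_k)$, using Corollary \ref{lscWp}, obtaining $d_{W_2}(\rho(t), \rho(s)) \le L|t-s|$ for all $s, t \in [0,T]$. Taking $s = 0$ in particular shows $\rho(t)$ sits in the closed $d_{W_2}$-ball of radius $LT$ around $\rho_0$, hence belongs to $\P_2(\R^d)$, and overall $\rho \in \Lip_{L, d_{W_2}}(0, T; \P_2)$, as required.
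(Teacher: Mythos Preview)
Your compactness argument (Ascoli--Arzel\`a inside the narrowly compact $d_{W_2}$-ball around $\rho_0$, then lower semicontinuity of $d_{W_2}$ under narrow convergence to transfer the Lipschitz bound to the limit) is correct and is exactly what the paper does. The gap is in how you derive the one-step estimate $d_{W_2}(\rho^k_{i+1},\rho^k_i)\le L\tau_k$.

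Plugging the geodesic competitor $\mu_{1-\eps}$ into the JKO minimality gives $\frac{2-\eps}{2\tau_k}\,d_{W_2}^2\le\eps^{-1}\big(\F_{\nu(\sigma)}(\mu_{1-\eps})-\F_{\nu(\sigma)}(\rho^k_{i+1})\big)$, and you then propose to pass to the limit on the right via Proposition~2.2 of \cite{carrillo2011global}. But that proposition is a subdifferential-type statement: it yields the \emph{lower} bound $\liminf_{\eps\to0}\eps^{-1}\big(\F_{\nu(\sigma)}(\mu_{1-\eps})-\F_{\nu(\sigma)}(\rho^k_{i+1})\big)\ge\int v(y)\cdot(x-y)\,d\gamma$ (this is precisely how it is invoked in the paper's proof of Proposition~\ref{uniqueness of weak solutions}). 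An upper bound for $\tau_k^{-1}d_{W_2}^2$ by a quantity for which you only have a lower bound does not chain, so your displayed Euler--Lagrange inequality does not follow. The repair is immediate and does not use Proposition~2.2 at all: bound $\F_{\nu(\sigma)}(\mu_{1-\eps})-\F_{\nu(\sigma)}(\rho^k_{i+1})$ from above directly via the Lipschitz constants of $W$ and $V$, obtaining $\F_{\nu(\sigma)}(\mu_{1-\eps})-\F_{\nu(\sigma)}(\rho^k_{i+1})\le\eps\,(\Lip(W)+M\Lip(V))\,d_{W_2}(\rho^k_{i+1},\rho^k_i)$ and hence $d_{W_2}(\rho^k_{i+1},\rho^k_i)\le\tau_k(\Lip(W)+M\Lip(V))$, in fact sharper than the stated constant $6$.

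For comparison, the paper bypasses geodesic interpolation entirely: it takes $\rho^k_i$ itself as competitor (the endpoint $\eps=1$ of your family), obtaining $\frac{1}{2\tau_k}d_{W_2}^2\le\F_{\nu(\sigma)}(\rho^k_i)-\F_{\nu(\sigma)}(\rho^k_{i+1})$, and then estimates the right-hand side using the Lipschitz bounds on $W,V$ together with Young's inequality with parameter proportional to $\tau_k$; after absorbing a $\frac{1}{4\tau_k}d_{W_2}^2$ term on the left one arrives at the constant $6$. Both routes ultimately rest on the same ingredient --- the global Lipschitz bounds, not any subdifferential calculus for $\W$ --- and once repaired your geodesic route is marginally cleaner and gives the better constant.
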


\begin{proof}
For a fixed $k \in \N$, we recall the definition of $\rho^k$
\[\rho^k(t):=\sum\limits_{i=0}^{k-1}\rho_i^{k}\mathbbm{1}_{[\tau_ki,\tau_k(i+1))}(t) \]
where $\rho^k_0 = \rho_0$ and
\[  \rho^k_{i+1} \in \operatorname*{argmin}\limits_{\mu \in\P_2(\R^d)}\frac{1}{2\tau_k}d_{W_2}^2(\mu,\rho_{i}^k)+ \F_{\nu(\tau_k(i+1))}(\mu)    \quad \text{for every $i = 0 \ldots k-1$}. \]
First of all, notice that $\rho^k(t)\in \P_2(\mathbb{R}^d)$ for every $t \in [0,T]$ since $\rho^k_i \in \P_2(\mathbb{R}^d)$ for every $i = 0, \ldots, k$.
Let us now consider the $i$-th minimisation problem in \eqref{eq:JKO},  where we consider 
$\rho^k_i$ as a competitor.  We get
\begin{equation*}
\frac{1}{2\tau_k}d_{W_2}^2(\rho_i^k,\rho_{i+1}^k)+\F_{\nu(\tau_k(i+1))}(\rho_{i+1}^k) \leq 0+\F_{\nu(\tau_k(i+1))}(\rho_i^k),
\end{equation*}
which in turn gives 
\begin{equation}\label{unica importante}
\begin{aligned}
\frac{1}{2\tau_k} &d_{W_2}^2(\rho_i^k,\rho_{i+1}^k) \le \F_{\nu(\tau_k(i+1))}(\rho_i^k)-\F_{\nu(\tau_k(i+1))}(\rho_{i+1}^k)\\
=&\underbrace{\W(\rho_i^k)-\W(\rho_{i+1}^k)}_{\text{first term}}+\underbrace{\V_{\nu(\tau_k(i+1))}(\rho_i^k)-\V_{\nu(\tau_k(i+1))}(\rho_{i+1}^k)}_{\text{second term}}.
\end{aligned}
\end{equation}
Let us first focus on the \emph{first term}.  Notice that, for $\gamma_i^k\in\Gamma_o(\rho_i^k,\rho_{i+1}^k)$, we can say 
\begin{align*}
\int_{\mathbb{R}^d}W\ast\rho_i^kd\rho_i^k &-\int_{\mathbb{R}^d} W\ast\rho_{i+1}^kd\rho_{i+1}^k\\
=& \int_{\mathbb{R}^d\times\mathbb{R}^d\times\mathbb{R}^d\times\mathbb{R}^d} (W(x-y)-W(z-w))d\rho_i^k(x)d\rho_i^k(y)d\rho_{i+1}^k(z)d\rho_{i+1}^k(w)\\
\le& \int_{\mathbb{R}^d\times\mathbb{R}^d\times\mathbb{R}^d\times\mathbb{R}^d}\Lip(W)(|x-z|+|z-w|)d\gamma_i^k\otimes\gamma_i^k(x,y,z,w)\\
=&\int_{\mathbb{R}^d\times\mathbb{R}^d}\Lip(W)|x-z|d\gamma_i^k(x,z)+\int_{\mathbb{R}^d\times\mathbb{R}^d}\Lip(W)|y-w|d\gamma_i^k(y,w),
\end{align*}
and applying the Young inequality $ab\le\frac{\eps a^2}{2}+\frac{b^2}{2\eps}$ on both terms with $\eps=8\tau_k$,  we obtain 
\begin{equation}\label{eq:first term}
\begin{aligned}
 \W(\rho_i^k)-\W(\rho_{i+1}^k)
\leq& \frac{16\tau_k\Lip(W)^2}{2}+\frac{1}{16\tau_k}\int_{\mathbb{R}^d\times\mathbb{R}^d}|x-z|^2d\gamma_i^k(x,y)\\
&+\frac{1}{16\tau_k}\int_{\mathbb{R}^d\times\mathbb{R}^d}|y-w|^2d\gamma_i^k(y,w)\\
=& \,8\tau_k\Lip(W)^2+\frac{1}{8\tau_k}d_{W_2}^2(\rho_i^k,\rho_{i+1}^k).
\end{aligned}
\end{equation}

Instead,  for the \emph{second term} we get
\begin{align*}
\int_{\mathbb{R}^d}V\ast\nu(\tau_k(i&+1))d(\rho_i^k-\rho_{i+1}^k)\\
=&\int_{\mathbb{R}^d\times\mathbb{R}^d}V(x-y)d(\rho_i^k(x)-\rho_{i+1}^k(x))d\nu(\tau_k(i+1))(y)\\
=&\int_{\mathbb{R}^d}\left(\int_{\mathbb{R}^d\times\mathbb{R}^d}V(x-y)d\gamma_i^k(x,z)-\int_{\mathbb{R}^d\times\mathbb{R}^d}V(z-y)d\gamma_i^k(x,z)\right) d\nu(\tau_k(i+1))(y)\\
\le& \int_{\mathbb{R}^d}\left(\int_{\mathbb{R}^d\times\mathbb{R}^d}\Lip(V)|x-z|d\gamma_i^k(x,z)\right) d\nu(\tau_k(i+1))(y)\\
\le&\int_{\mathbb{R}^d\times\mathbb{R}^d}M\Lip(V)|x-z|d\gamma_i^k(x,z),
\end{align*}
where in the last inequality we used that $\nu(\tau_k(i+1))(\mathbb{R}^d)\le M$.  Applying again the Young inequality with $\eps = 4M \tau_k$,  we then deduce 
\begin{equation}\label{eq:second term}
    \begin{aligned}
\V_{\nu(\tau_k(i+1))}(\rho_i^k)-\V_{\nu(\tau_k(i+1))}(\rho_{i+1}^k)
\le& \int_{\mathbb{R}^d\times\mathbb{R}^d}\left(2M^2\tau_k(\Lip(V))^2+\frac{|x-z|^2}{8\tau_k}\right)d\gamma_i^k(x,z)\\
=&\, 2\tau_k (\Lip(V))^2M^2+\frac{1}{8\tau_k}d_{W_2}^2(\rho^k_i,\rho_{i+1}^k).
\end{aligned}
\end{equation}

Gathering \eqref{unica importante} together with the estimates  \eqref{eq:first term} and the \eqref{eq:second term},  we get 
\begin{equation*}
\frac{1}{2\tau_k}d_{W_2}^2(\rho_i^k,\rho_{i+1}^k)\le 2\tau_k\Lip(V)^2M^2+8\tau_k\Lip(W)^2+\frac{1}{4\tau_k}d_{W_2}^2(\rho_i^k,\rho_{i+1}^k)
\end{equation*}
which directly implies the following uniform bound on the $2$-Wasserstein distance
\begin{equation}\label{anche questa importante}
d_{W_2}(\rho_i^k,\rho_{i+1}^k) \le 6(M\Lip(V)+\Lip(W)) \tau_k.
\end{equation}
Let us observe that \eqref{anche questa importante} does not depend on the index $i$ neither on $\nu$.  Moreover, for any $t \in [0,T]$, summing \eqref{anche questa importante} for $i=0,\ldots,k-1$ we have
\begin{equation}\label{I}
 d_{W_2}(\rho_0, \rho^k(t)) \leq 6(M\Lip(V)+\Lip(W))T,
\end{equation}
which means that $\rho^k(t) \in B_{d_{W_2}}(\rho_0, r)$ with $r = 6(M\Lip(V)+\Lip(W))T$ for every $t \in [0,T]$.

Let now $0 \leq s < t \leq T$ and for each $k \in \N$ consider 
\[ s(k) := \max \{\tau_k i \leq s : i \leq k \} \quad \text{ and }  \quad t(k):= \max \{  \tau_k i \leq t  : i \leq k   \},   \]
then $s(k) \leq t(k)$ and 
\[  s = \lim_{k \to \infty} s(k) \quad \text{ and }  \quad t =  \lim_{k \to \infty} t(k).   \]
Thanks to \eqref{anche questa importante} and up to take $k$ big enough (i.e.  $\tau_k$ small enough),  we can compute 
\begin{align*}
d_{W_2}(\rho^k(s),\rho^k(t))=& d_{W_2}(\rho^k_{s(k)/\tau_k},  \rho^k_{t(k)/\tau_k}) \leq \sum_{h = s(k)/\tau_k}^{(t(k) - 1)/\tau_k} d_{W_2}(\rho^k_h, \rho^k_{h+1}) \\
& \leq 6(M\Lip(V)+\Lip(W))  \big(t(k) - s(k)\big)
\end{align*}
and passing to the limsup in $k$ we deduce 
\begin{equation}\label{II}
 \limsup_{k \to \infty} d_{W_2}(\rho^k(s),\rho^k(t)) \leq 6(M\Lip(V)+\Lip(W)) (t - s).
\end{equation}

As observed in Remark \ref{wass-ball-tight and closed} closed Wasserstein balls are sequentially compact with respect to the narrow convergence,  then thanks to \eqref{I} and \eqref{II} we can apply the refined version of Ascoli-Arzel\`a Theorem (see \cite{AGS},  Proposition 3.3.1) to conclude that there exists a further, not relabeled, subsequence $\rho^k$ and a limit curve $\rho : [0,T] \to \P_2(\R^d)$ that is $2$-Wasserstein continuous on $[0,T]$ and such that 
\[  \rho^k(t) \weak \rho(t) \ \text{ narrowly for every  } t \in [0,T].   \]

We are left to check that the limiting curve $\rho$ belongs to the space $\Lip_{L,d_{W_2}}(0,T;\P_2)$.  As observed in Corollary \ref{lscWp}, the $2$-Wasserstein distance is lower semi-continuous with respect to the narrow convergence and from \eqref{II} we deduce
\[   d_{W_2}(\rho(s),\rho(t)) \leq \liminf_{k \to \infty} d_{W_2}(\rho^k(s),\rho^k(t)) \leq \limsup_{k \to \infty} d_{W_2}(\rho^k(s),\rho^k(t))  \leq 6(M\Lip(V)+\Lip(W)) (t - s)  \]
thus concluding the proof. 
\end{proof}

We conclude this section with the proof of Theorem \ref{thm: TE gradient flows}.

\begin{proof}[Proof of Theorem \ref{thm: TE gradient flows}]

Proposition \ref{prop: compactness JKO} grants that the sequence $\rho^k$ of piecewise constant interpolations of the measures $(\rho^k_i)_{i=0}^{k-1}$ narrowly converges pointwise in time to a curve $\rho \in \Lip_{L,d_{W_2}}(0,T;\P_2)$.  We claim that $\rho$ is a weak measure solution of \eqref{eq: TE gradient flow} in the sense of Definition \ref{def: weak solutions}.  Since,  by construction,  $\rho^k(0) = \rho_0$ for every $k \in \N$, we deduce that $\rho(0) = \rho_0$.  Moreover, \[\Lip_{L,d_{W_2}}(0,T;\P_2) \subset C_{d_n}([0,T] ;\P_2(\R^d))\] and $\partial^0 W \ast \rho(t) \in L^2(\rho(t))$ for every $t \in [0,T]$.  From the Lipschitz regularity of $W$ we deduce that $\partial^0 W \ast \rho \in L^1((0,T); L^2(\rho(t)) )$. 

Therefore,  to conclude, we are only left to show that $\rho$ satisfies the weak formulation of the transport equation, namely that for every $\phi \in C^\infty_c((0,T) \times \R^d)$ it holds 
\begin{equation}\label{eq: weak form}
\int_0^T \int_{\R^d}\left(\frac{\partial\phi}{\partial t}(t,x)+   (\partial^0 W \ast \rho (t,x) + \nabla V \ast \nu (t,x))   \cdot \nabla\phi(t,x)\right)d\rho(t,x)=0.  
\end{equation}

For clarity we split the proof in four Steps. 

\textbf{Step 1.} In this Step we show that the sequence $\rho^k$ satisfies a suitable approximation of \eqref{eq: weak form} pointwise in time. This is the most classical part of the proof where, using the minimality condition in \eqref{eq:JKO} and the assumptions on the kernels, we show that for each $i = 0 \ldots k-1,$  and for every $\xi \in C^\infty_c (\R^d)$ the following identity holds 
\begin{align}\label{eq: fake weak form}
\nonumber
0=& \frac{1}{\tau_k}\int_{\R^d\times\R^d}(x-y)\cdot\nabla\xi(x)d\gamma^k_i(x,y)\\
\nonumber
&+ \frac12\int_{\R^d\times\R^d}\left[\nabla W(x-y)\cdot(\nabla\xi(x)-\nabla\xi(y))\right]d\rho^k_{i+1}(x)d\rho^k_{i+1}(y)\\
& +\int_{\R^d\times\R^d}(\nabla V(x-y)\cdot(\nabla\xi(x))d\rho^k_{i+1}(x)d\nu(\tau_k(i+1),y),
\end{align}
where $\gamma_i^{k}$ is an optimal plan between $\rho_i^k,\,  \rho_{i+1}^k$, i.e. $\gamma_i^{k} \in \Gamma_o(\rho_{i}^k,\rho_{i+1}^k)$.

Let $\xi \in C^\infty_c (\R^d)$ and $\eps >0$ be arbitrary fixed.  Recall that we constructed $\rho_{i+1}^k$ from $\rho^k_i$ by the minimization problem \eqref{eq:JKO}.  In particular, if we consider $\mu$ to be a smooth perturbation of $\rho^k_{i+1}$  i.e.  $\mu=T^{\eps}_{\#}\rho_{i+1}^k$ where $T^{\eps}(x)=x+\eps\nabla\xi(x)$,  by minimality we get 
\begin{equation*}
 \Phi_{\tau_k}^\nu(\rho^k_i;\rho_{i+1}^k)\le \Phi_{\tau_k}^\nu(\rho^k_i;T^{\eps}_{\#}\rho_{i+1}^k)
\end{equation*}
where $\Phi_{\tau_k}^\nu$ is the penalised energy functional introduced in \eqref{penalised}. Expanding the functional $\Phi_{\tau_k}^\nu$ we find
\begin{equation}\label{eq: da minimizzazione}
0\le\frac{1}{2\tau_k}\left[d_{W_2}^2(\rho_i^k,T^{\eps}_{\#}\rho^k_{i+1})-d_{W_2}^2(\rho^k_i,\rho^k_{i+1})\right]+\F_{\nu(\tau_k(i+1))}(T^{\eps}_{\#}\rho^k_{i+1})-\F_{\nu(\tau_k(i+1))}(\rho^k_{i+1}).
\end{equation}

Recalling that $\gamma_i^{k}\in \Gamma_o(\rho_{i}^k,\rho_{i+1}^k)$,  we can estimate the terms in \eqref{eq: da minimizzazione} containing the Wasserstein distance in the following way
\begin{equation}\label{wass-term}
    \begin{aligned}
\frac{1}{2\tau_k}\Big[d_{W_2}^2(\rho_i^k,T^{\eps}_{\#}\rho^k_{i+1}) & -d_{W_2}^2(\rho^k_i,\rho^k_{i+1})\Big]\\
\le&\frac{1}{2\tau}\int_{\mathbb{R}^d\times\mathbb{R}^d}|x-y|^2d(T^\eps,Id)_{\#}\gamma_i^k(x,y)-\frac{1}{2\tau}\int_{\mathbb{R}^d\times\mathbb{R}^d}|x-y|^2d\gamma_i^k\\
=& \frac{1}{2\tau_k}\int_{\R^d\times\R^d}|T(x)-y|^2d\gamma_i^k(x,y)-\frac{1}{2\tau_k}\int_{\R^d\times\R^d}|x-y|^2d\gamma_i^k(x,y)\\
=&\frac{1}{2\tau_k}\int_{\R^d\times\R^d}(|x+\eps\nabla\xi(x)-y|^2-|x-y|^2)d\gamma_i^k(x,y).
\end{aligned}
\end{equation}

Notice,  that in the first inequality of \eqref{wass-term} we used the competitor plan $(T^\eps,Id)_{\#}\gamma_i^k\in\Gamma(\mu,\rho_i^k)$ where $(T^\eps,Id): (x,y)\mapsto (T^\eps(x),y)$.
Indeed, since
\begin{equation*}
\pi_1\circ(T^\eps,Id)(x,y)=T^\eps(x)=T^\eps\circ\pi_1(x,y) \quad \text{ and } \quad \pi_2\circ(T^\eps,Id)(x,y)=y=\pi_2(x,y),
\end{equation*}
we have
\begin{align*}
(\pi_1)_{\#}(T^\eps,Id)_{\#}\gamma_i^k=(\pi_1\circ(T^\eps,Id))_{\#}\gamma_i^k=(T^\eps\circ\pi_1)_{\#}\gamma_i^k =T^\eps_{\#}\underbrace{(\pi_1)_{\#}\gamma_i^k}_{\rho_{i+1}} =\mu,
\end{align*}
and also
\begin{equation*}
(\pi_2)_{\#}(T^\eps,Id)_{\#}\gamma_i^k=(\pi_2)_{\#}\gamma_i^k=\rho^k_i.
\end{equation*}

Let us now discuss the terms in \eqref{eq: da minimizzazione} involving the energy $\F_{\nu(\tau_k(i+1))}$.  From the definition of $T^\eps$ we can directly compute 
\begin{align*}
\F_{\nu(\tau_k(i+1))}(T^{\eps}_{\#}\rho^k_{i+1})&-\F_{\nu(\tau_k(i+1))}(\rho^k_{i+1})\\
=&\frac12\int_{\R^d \times \R^d} \left[W(x-y+\eps(\nabla\xi(x)-\nabla\xi(y))-W(x-y)\right]d\rho^k_{i+1}(x)d\rho^k_{i+1}(y)\\
& +\int_{\R^d \times \R^d} \left[V(x-y+\eps\nabla\xi(x))-V(x-y)\right]d\rho^k_{i+1}(x)d\nu(\tau_k(i+1),y).
\end{align*}
Gathering together the above identity,  \eqref{wass-term} and \eqref{eq: da minimizzazione},  we get
\begin{equation}\label{non so come chiamarla}
    \begin{aligned}
0\le& \frac{1}{2\tau_k}\int_{\R^d\times\R^d}(|x-y+\eps\nabla\xi(x)|^2-|x-y|^2)d\gamma^k_i(x,y)\\
& +\frac12 \int_{\R^d\times\R^d}\left[W(x-y+\eps(\nabla\xi(x)-\nabla\xi(y))-W(x-y)\right]d\rho^k_{i+1}(x)d\rho^k_{i+1}(y)\\
& + \int_{\R^d\times\R^d}(V(x-y+\eps\nabla\xi(x))-V(x-y))d\rho^k_{i+1}(x)d\nu(\tau_k(i+1),y)
\end{aligned}
\end{equation}

In order to obtain \eqref{eq: fake weak form},  we need to divide \eqref{non so come chiamarla} by $\eps$ and pass to the limit separately in the three terms of the r.h.s.  For more clarity,  we denote
\begin{align*}
I &:= \frac{1}{2\tau_k}\int_{\R^d\times\R^d}(|x-y+\eps\nabla\xi(x)|^2-|x-y|^2)d\gamma^k_i(x,y), \\
II &:= \frac12 \int_{\R^d\times\R^d}\left[W(x-y+\eps(\nabla\xi(x)-\nabla\xi(y))-W(x-y)\right]d\rho^k_{i+1}(x)d\rho^k_{i+1}(y), \\
III &:= \int_{\R^d\times\R^d}(V(x-y+\eps\nabla\xi(x))-V(x-y))d\rho^k_{i+1}(x)d\nu(\tau_k(i+1),y).
\end{align*}
Let us start with $I$.  Whenever $x \neq y$ we can find some $\bar{x} \in \spt(\xi)$ such that
\[
\left| \frac{|x-y+\eps\nabla\xi(x)|^2-|x-y|^2}{\eps} \right| \leq  2|x-y|\|\nabla\xi\|_{L^{\infty}}+|x-y|^2\|D^2\xi\|_{L^{\infty}}
\]
and the latter  is in $L^{1}(\gamma^k_i)$ since $\rho^k_i,\rho^k_{i+1}\in\P_2(\R^d)$. Then the limit as $\eps \to 0$ is well defined and corresponds to 
we deduce that 
\begin{equation}\label{limit of I}
 \begin{aligned}
\lim_{\eps \searrow 0} \,\frac{1}{2\tau_k}\int_{\R^d\times\R^d} & \frac{|x-y+\eps  \nabla\xi(x)|^2-|x-y|^2}{\eps}d\gamma^k_i(x,y) \\
 &=  \frac{1}{\tau_k}\int_{\R^d\times\R^d} (x-y) \cdot \nabla \xi(x) d\gamma^k_i(x,y).
\end{aligned}   
\end{equation}
We can deal with term $III$ similarly as done before for $I$.  Indeed,  since $V \in C^1(\R^d)$, we get 
\[
\left|\frac{V(x-y+\eps\nabla\xi(x))-V(x-y)}{\eps}\right| \le 2\Lip(V)\|\nabla\xi\|_{L^{\infty}(\R^d)}
\]
and the latter is in $L^1(\rho^k_{i+1}\otimes\nu(\tau_k(i+1)))$ since $\rho^k_{i+1}\otimes\nu(\tau_k(i+1))$ is a finite measure.  Then
\begin{equation}\label{limit of III}
  \begin{aligned}
\lim_{\eps \searrow 0} \,\int_{\R^d\times\R^d} & \frac{V(x-y+\eps\nabla\xi(x))-V(x-y)}{\eps}d\rho^k_{i+1}(x)d\nu(\tau_k(i+1),y)  \\
 &=  \int_{\R^d\times\R^d} \nabla V(x-y) \cdot \nabla \xi(x) d\rho^k_{i+1}(x) d\nu(\tau_k(i+1),y).
\end{aligned}  
\end{equation}
For what concerns term $II$,  it is immediate to prove the following pointwise convergence 
\[
\frac{W(x-y+\eps(\nabla\xi(x)-\nabla\xi(y)))-W(x-y)}{\eps} \to \nabla W(x-y)\cdot (\nabla\xi(x)-\nabla\xi(y))),
\]
everywhere in $\R^d \times \R^d$. Since
\begin{align*}
\left|\frac{W(x-y+\eps(\nabla\xi(x)-\nabla\xi(y)))-W(x-y)}{\eps}\right|\le 2\Lip(W)\|\nabla\xi\|_{L^{\infty}(\R^d)},
\end{align*}
and the r.h.s.  is in $L^1(\rho^k_{i+1}\otimes\rho^k_{i+1})$, we obtain
\begin{equation}\label{limit of II}
\begin{aligned}
\lim\limits_{\eps\searrow 0} \,\int_{\R^d\times\R^d} & \frac{W(x-y+\eps(\nabla\xi(x)-\nabla\xi(y)))-W(x-y)}{\eps}d\rho^k_{i+1}(x)d\rho^k_{i+1}(y)\\
&= \int_{\R^d\times\R^d}\nabla W(x-y)\cdot(\nabla\xi(x)-\nabla\xi(y)))d\rho^k_{i+1}(x)d\rho^k_{i+1}(y).
\end{aligned}
\end{equation}
%
Dividing  \eqref{non so come chiamarla} by $\eps >0$ and taking the limit as $\eps\searrow0$,  from \eqref{limit of I}, \eqref{limit of III} and \eqref{limit of II} we obtain that
\begin{align*}
0\le& \frac{1}{\tau_k}\int_{\R^d\times\R^d}(x-y)\cdot\nabla\xi(x)d\gamma^k_i(x,y)\\
&+ \frac12 \int_{\R^d\times\R^d}\left[\nabla W(x-y)\cdot(\nabla\xi(x)-\nabla\xi(y))\right]d\rho^k_{i+1}(x)d\rho^k_{i+1}(y)\\
&+\int_{\R^d\times\R^d}(\nabla V(x-y)\cdot(\nabla\xi(x))d\rho^k_{i+1}(x)d\nu(\tau_k(i+1),y),
\end{align*}
and repeating the same argument with $\eps<0$ this time sending $\eps\nearrow 0$, we obtain the reverse inequality which implies \eqref{eq: fake weak form}. 

\textbf{Step 2.}
In this Step we show that the sequence $\rho^k$ satisfies the approximation of \eqref{eq: weak form} for test functions which are piecewise constant in the time variable.  More precisely,  we will show that for every $\xi \in C^\infty_c(\R^d)$ and 
\begin{equation}\label{thetak}
\theta^k(t)=\sum\limits_{i=0}^{k-1}\theta(i\tau_k)\mathbbm{1}_{[\tau_k i,\tau_k(i+1))} \mbox{ for some values } \{\theta(i\tau_k) \}_{i=0 \ldots k-1},     
\end{equation} 
the following inequalities hold
\begin{equation}\label{befor-limit}
    \begin{aligned}
\int_{\mathbb{R}^d}\xi(x)\theta^k(t)& d\rho^k(t,x) -\int_{\mathbb{R}^d}\xi(x)\theta^k(s)d\rho^k(s,x)\\
\le &\sum\limits_{h=j}^{i-1}\frac12\|\theta\|_{L^{\infty}}\|D^2\xi\|_{L^{\infty}}d_{W_2}^2(\rho^k_{h+1},\rho^k_h) + \mathcal{O}(\tau_k)\\
& +\frac12\int_s^t\int_{\mathbb{R}^d\times\mathbb{R}^d}\theta^k(q)
\nabla W(x-y)\cdot(\nabla\xi(x)-\nabla\xi(y))d\rho^k(q,x)\otimes\rho^k(q,y)\\
& +\int_s^t\int_{\mathbb{R}^d\times\mathbb{R}^d}\theta^k(q)\nabla V(x-y)\cdot\nabla\xi(x)d\rho^k(q,x)\otimes\nu^k(q,y)\\
& +\sum\limits_{h=j}^{i-1}(\theta(\tau_k(h+1))-\theta(\tau_k h))\int_{\mathbb{R}^d}\xi(x)d\rho_h^k(x)
\end{aligned}
\end{equation}
and
\begin{equation}\label{befor-limit_with_minus}
\begin{aligned}
\int_{\mathbb{R}^d}\xi(x)\theta^k(t) & d\rho^k(t,x) -\int_{\mathbb{R}^d}\xi(x)\theta^k(s)d\rho^k(s,x)\\
\geq & -\sum\limits_{h=j}^{i-1}\frac12\|\theta\|_{L^{\infty}}\|D^2\xi\|_{L^{\infty}}d_{W_2}^2(\rho^k_{h+1},\rho^k_h) + \mathcal{O}(\tau_k)\\
& +\frac12\int_s^t\int_{\mathbb{R}^d\times\mathbb{R}^d}\theta^k(q)
\nabla W(x-y)\cdot(\nabla\xi(x)-\nabla\xi(y))d\rho^k(q,x)\otimes\rho^k(q,y)\\
& +\int_s^t\int_{\mathbb{R}^d\times\mathbb{R}^d}\theta^k(q)\nabla V(x-y)\cdot\nabla\xi(x)d\rho^k(q,x)\otimes\nu^k(q,y)\\
& +\sum\limits_{h=j}^{i-1}(\theta(\tau_k(h+1))-\theta(\tau_k h))\int_{\mathbb{R}^d}\xi(x)d\rho_h^k(x)
\end{aligned}
\end{equation}
for every $s \leq t$ in $[0,T]$,  where $s \in [\tau_k i,\tau_k(i+1))$ and $s \in [\tau_k j,\tau_k(j+1))$ for some $i \leq j$, and  $\nu^k: [0,T] \to \M_M^R(\R^d)$ corresponds to the piecewise constant interpolation of $\nu$ defined as 
\[ \nu^k(t) :=  \sum\limits_{i=0}^{k-1} \nu(\tau_k(i+1)) \mathbbm{1}_{[\tau_k i,\tau_k(i+1))}(t). \]

We only prove \eqref{befor-limit}, since \eqref{befor-limit_with_minus} follows from a similar argument. Let $\theta^k(t)$ and $\xi \in C^\infty_c(\R^d$ be arbitrary fixed and let $i \in \{0, \dots k-1\}$.   Multiplying \eqref{eq: fake weak form} by $\theta(\tau_k(i+1))$ we obtain 
\begin{equation}\label{eq:zero}
    \begin{aligned}
0=&\theta(\tau_k(i+1))\int_{\mathbb{R}^d\times\mathbb{R}^d}(x-y)\cdot\nabla\xi(x)d\gamma_i^k(x,y)\\
+&\frac{\tau_k}{2}\theta(\tau_k(i+1))\int_{\mathbb{R}^d\times\mathbb{R}^d}
\nabla W(x-y)\cdot(\nabla\xi(x)-\nabla\xi(y))d\rho_{i+1}^k(x)d\rho_{i+1}^k(y)\\
+&\tau_k \theta(\tau_k(i+1))\int_{\mathbb{R}^d\times\mathbb{R}^d}\nabla V(x-y)\cdot\nabla\xi(x)d\rho_{i+1}^k(x)d\nu(\tau(i+1),y).
\end{aligned}
\end{equation}
From a second order Taylor expansion on $\xi$ 
we deduce
\begin{align*}
\theta(\tau_k(i+1))\int_{\mathbb{R}^d\times\mathbb{R}^d} & \left(\xi(y)-\xi(x)+(x-y)\cdot\nabla\xi(x)\right) d\gamma_i(x,y)\\
=\,&\theta(\tau_k(i+1))\int_{\mathbb{R}^d\times\mathbb{R}^d}\frac12(x-y)^T D^2\xi(\bar{x})(x-y)d\gamma_i^k(x,y),
\end{align*}
for a proper $\bar{x}$ in $\R^d$. Thus
\begin{equation}\label{modulus}
\begin{aligned}
&\left|\theta(\tau_k(i+1))\left(\int_{\mathbb{R}^d}  \xi(x)d\rho_{i+1}^k(x)-\int_{\mathbb{R}^d}\xi(x)d\rho_i^k(x)+\int_{\mathbb{R}^d\times\mathbb{R}^d}(x-y)\cdot\nabla\xi(x)d\gamma_i^k(x,y) \right) \right|\\
\leq & \frac12\|\theta\|_{L^{\infty}}\|D^2\xi\|_{L^{\infty}}d_{W_2}^2(\rho^k_{i+1},\rho^k_i).
\end{aligned}
\end{equation}
By applying the above inequality to \eqref{eq:zero} we get
\begin{equation}\label{eq:succesiveIncrements}
\begin{aligned}
\theta(\tau_k(i+1))&\left(\int_{\mathbb{R}^d}\xi(x)d\rho_{i+1}^k(x)-\int_{\mathbb{R}^d}\xi(x)d\rho_i^k(x)\right)\\
\leq& \frac12\|\theta\|_{L^{\infty}}\|D^2\xi\|_{L^{\infty}}d_{W_2}^2(\rho^k_{i+1},\rho^k_i)\\
+&\frac{\tau_k}{2}\theta(\tau_k(i+1))\int_{\mathbb{R}^d\times\mathbb{R}^d}
\nabla W(x-y)\cdot(\nabla\xi(x)-\nabla\xi(y))d\rho_{i+1}^k(x)d\rho_{i+1}^k(y)\\
+&\theta(\tau_k(i+1))\int_{\mathbb{R}^d\times\mathbb{R}^d}\nabla V(x-y)\cdot\nabla\xi(x)d\rho_{i+1}^k(x)d\nu(\tau_k(i+1),y).
\end{aligned}    
\end{equation}
Let now $s\leq t$, where $s\in[\tau_k i,\tau_k(i+1))$ and $t\in [\tau_k j, \tau_k(j+1))$ for some $i \leq j$,  then by a telescopic sum we can compute
\begin{align*}
\int_{\mathbb{R}^d}\xi(x)\theta^k(t)d\rho^k(t,x)&-\int_{\mathbb{R}^d}\xi(x)\theta^k(s)d\rho^k(s,x) \\
=&\, \theta(j\tau_k)\int_{\mathbb{R}^d}\xi(x)d\rho^k_j(x)-\theta(i\tau_k)\int_{\mathbb{R}^d}\xi(x)d\rho^k_i(x) \\
=&\,\sum\limits_{h=i}^{j-1}\theta((h+1)\tau_k)\left(\int_{\mathbb{R}^d}\xi(x)d\rho^k_{h+1}(x)-\int_{\mathbb{R}^d}\xi(x)d\rho_h(x)\right)\\
&+\sum\limits_{h=i}^{j-1}(\theta((h+1)\tau_k)-\theta(h\tau_k))\int_{\mathbb{R}^d}\xi(x)d\rho_h^k(x),
\end{align*}
and thanks to \eqref{eq:succesiveIncrements} we further deduce that
\begin{align*}
\int_{\mathbb{R}^d}\xi(x)\theta^k(t)d\rho^k(t,x) & -\int_{\mathbb{R}^d}\xi(x)\theta^k(s)d\rho^k(s,x)\\
\le& \frac12 \sum\limits_{h=i}^{j-1} \|\theta\|_{L^{\infty}}\|D^2\xi\|_{L^{\infty}}d_{W_2}^2(\rho^k_{h+1},\rho^k_h)\\
+& \frac{\tau_k}{2} \sum\limits_{h=i}^{j-1}\theta(\tau_k(h+1))\int_{\mathbb{R}^d\times\mathbb{R}^d}
\nabla W(x-y)\cdot(\nabla\xi(x)-\nabla\xi(y))d\rho_{h+1}^k(x) d\rho_{h+1}^k(y)\\
+& \tau_k\sum\limits_{h=i}^{j-1}\theta(\tau_k(h+1))\int_{\mathbb{R}^d\times\mathbb{R}^d}\nabla V(x-y)\cdot\nabla\xi(x)d\rho_{h+1}^k(x)d\nu^k(\tau_k(h+1),y)\\
+&\sum\limits_{h=i}^{j-1}(\theta(\tau_k(h+1))-\theta(\tau_k h))\int_{\mathbb{R}^d}\xi(x)d\rho_h^k(x).
\end{align*}
Finally,  recalling the piecewise constant structure of $\theta^k$ and $\nu^k$ and the fact that $t -s = (j - i)\tau_k + \mathcal{O}(\tau_k)$,  it is immediate to deduce \eqref{befor-limit} from the above inequality. Note that \eqref{befor-limit_with_minus} easily follows from a similar argument but applying the reverse inequality of \eqref{modulus} in \eqref{eq:succesiveIncrements}.

\textbf{Step 3.} In this Step we consider a generic test function $\theta \in C^\infty_c(0,T)$,  its corresponding piecewise constant approximation $\theta^k(t)$ defined in \eqref{thetak} and we pass to the limit in \eqref{befor-limit} when $k \to \infty$,  or,  equivalently,  when $\tau_k \to 0$. 
For clarity,  we analyse separately each term involved in  \eqref{befor-limit}-\eqref{befor-limit_with_minus}.

The convergence 
\[
\lim_{k\to \infty} \theta^k(t)\int_{\R^d}\xi(x)d\rho^k(t,x) = \theta(t)\int_{\R^d}\xi(x)d\rho(t,x)
\]
is a straightforward consequence of the narrow convergence $\rho^k\weak\rho$ and the fact that $|\theta^k(q)-\theta(q)|\le \|\theta'\|_{L^{\infty}(\R^d)}\tau_k$.  The convergence
\begin{equation*} 
\lim_{k\to \infty}  \frac12\sum\limits_{h=i}^{j-1} \|\theta\|_{L^\infty([0,T])}\|D^2\xi\|_{L^{\infty}(\R^d)}d_{W_2}^2(\rho^k_{h+1},\rho^k_h) + \mathcal{O}(\tau_k) =  0 
\end{equation*}
is granted by Proposition \ref{prop: compactness JKO},  indeed  
\[
\limsup_{k \to \infty} \sum\limits_{h=i}^{j-1} d_{W_2}^2(\rho^k_{h+1},\rho^k_h)= \limsup_{k \to \infty} \sum\limits_{h=i}^{j-1} d_{W_2}^2(\rho^k(h\tau_k),\rho^k((h+1)\tau_k)) \leq \sum\limits_{h=i}^{j-1} L^2\tau_k^2 \le L^2 \tau_k.
\]
In order to show that
\begin{align*}
\lim_{k \to \infty} &\frac12\int_s^t\int_{\mathbb{R}^d\times\mathbb{R}^d}\theta^k(q)
\nabla W(x-y)\cdot(\nabla\xi(x)-\nabla\xi(y))d\rho^k(q,x)\otimes\rho^k(q,y)\\
&= \frac12\int_s^t\int_{\mathbb{R}^d\times\mathbb{R}^d}\theta(q)
\nabla W(x-y)\cdot(\nabla\xi(x)-\nabla\xi(y))d\rho(q,x)\otimes\rho(q,y)
\end{align*}
is enough to notice that it is instead equivalent to 
\begin{align*}
\lim_{k \to \infty} &\frac12\int_s^t\int_{\mathbb{R}^d\times\mathbb{R}^d}(\theta^k(q)-\theta(q))
\nabla W(x-y)\cdot(\nabla\xi(x)-\nabla\xi(y))d\rho^k(q,x)\otimes\rho^k(q,y)\\
& +\frac12\int_s^t\int_{\mathbb{R}^d\times\mathbb{R}^d}\theta(q)
\nabla W(x-y)\cdot(\nabla\xi(x)-\nabla\xi(y))d(\rho^k\otimes\rho^k-\rho\otimes\rho) = 0,
\end{align*}
and the latter follows by applying Lemma \ref{narroConvProd} and recalling that $|\theta^k(q)-\theta(q)|\le \|\theta'\|_{L^{\infty}(\R^d)}\tau_k$ and $\nabla W(x-y)\cdot(\nabla\xi(x)-\nabla\xi(y))$ is continuous and bounded. By a similar argument as the one above
\begin{align*}
\lim_{k \to \infty}&\int_s^t\int_{\mathbb{R}^d\times\mathbb{R}^d}\theta^k(q)\nabla V(x-y)\cdot\nabla\xi(x)d\rho^k(q,x)\otimes\nu^k(q,y)\\
&=\int_s^t\int_{\mathbb{R}^d\times\mathbb{R}^d}\theta(q)\nabla V(x-y)\cdot\nabla\xi(x)d\rho(q,x)\otimes\nu(q,y),
\end{align*}
holds recalling that $\nabla V$ is continuous by assumption,  moreover Lemma \ref{lem: piecewise interpolation} and Lemma \ref{narroConvProd} ensure that $\rho^k \otimes \nu^k\weak\rho\otimes\nu$ narrowly. We are left to show that
\begin{equation*}
\lim_{k \to \infty} \sum\limits_{h=i}^{j-1} \left(\theta(\tau_k(h+1))-\theta(\tau_k h)\right)\int_{\R^d}\xi(x)d\rho^k_h(x) =  \int_s^t \theta'(q)\int_{\R^d}\xi(x)d\rho(q,x)dq
\end{equation*}
By first order expansion,  we can find some $\eta_h\in (\tau_k h,\tau_k(h+1))$ such that
\begin{align*}
\theta(\tau_k(h+1))-\theta(\tau_k h)) =&\theta'(\tau_k h)\tau_k+\theta''(\eta_h)\frac{\tau_k^2}{2}.
\end{align*}
In particular
\begin{equation}\label{approx1}
\begin{aligned}
&\left|\sum\limits_{h=i}^{j-1}\left(\theta(\tau_k(h+1))-\theta(\tau_k h)\right)\int_{\R^d}\xi(x)d\rho^k_h(x)-\sum\limits_{h=i}^{j-1}\tau_k\theta'(\tau_k h)\int_{\R^d}\xi(x)d\rho^k_h(x)\right|\\
&\le \sum\limits_{h=i}^{j-1} \|\theta''\|_{L^{\infty}(\R^d)}\frac{\tau_k^2}{2}\|\xi\|_{L^{\infty}(\R^d)} \le\frac{\tau_k}{2}\|\theta\|_{L^{\infty}(\R^d)}\|\xi\|_{L^{\infty}(\R^d)}, 
\end{aligned}
\end{equation}
and 
\begin{equation}\label{approx2}
    \begin{aligned}
&\sum\limits_{h=i}^{j-1}\tau_k\theta'(\tau_k h)\int_{R^d}\xi(x)d\rho^k_h(x)\\
=&\sum\limits_{h=i}^{j-1}\left[\int_{\tau_k h}^{\tau_k(h+1)}(\theta'(\tau_k h)-\theta'(q))dq+\int_{\tau_k h}^{\tau_k(h+1)}\theta'(q)dq\right]\int_{\R^d}\xi(x)d\rho^k_h(x)\\
\le& \tau_k\|\theta''\|_{L^{\infty(\R^d)}}\|\xi\|_{L^{\infty(\R^d)}}+\int_{\tau_k i}^{\tau_k j}\theta'(q)\int_{\R^d}\xi(x)d\rho^k(q,x)dq.
\end{aligned}
\end{equation}
Then we conclude passing to the limit as $k \to \infty$ in \eqref{approx1}, \eqref{approx2} and recalling that $\tau_k i \to s$,  $\tau_k j \to t$ and $\theta' \xi \in C_b((0,T) \times \R^d)$.

Summarizing,  passing to the limit as $k \to \infty$ in \eqref{befor-limit}-\eqref{befor-limit_with_minus}, we obtain the following identity
\begin{equation}\label{eq:almostWeakForm}
\begin{aligned}
\int_{\mathbb{R}^d} \xi(x)\theta(t)d\rho(t,x) &-\int_{\mathbb{R}^d}\xi(x)\theta(s)d\rho(s,x)\\
=&\frac12\int_s^t\int_{\mathbb{R}^d\times\mathbb{R}^d}\theta(q)\nabla W(x-y)\cdot (\nabla\xi(x)-\nabla\xi(y))d\rho(q,x)\otimes\rho(q,y) \\ 
&+\int_s^t\int_{\mathbb{R}^d\times\mathbb{R}^d}\theta(q)\nabla V(x-y)\cdot \nabla\xi(x)d\rho(q,x)\otimes\nu(q,y) \\
&+\int_s^t\int_{\mathbb{R}^d}\theta'(q)\xi(x)d\rho(q,x).
\end{aligned}
\end{equation}

\textbf{Step 4.} In this Step we finally prove that $\rho$ satisfies \eqref{eq: weak form} for every $\phi \in C^\infty_c((0,T) \times \R^d)$.  This will come as a straightforward consequence of \eqref{eq:almostWeakForm}. 
Indeed, by a density argument a test function $\phi \in C^\infty_c((0,T) \times \R^d)$ can be approximated by $\theta \in  C^\infty_c(0,T)$ and $\xi \in  C^\infty_c(\R^d)$,  applying \eqref{eq:almostWeakForm} with $t=T$ and $s=0$ we deduce 

\begin{align*}
\int_{\mathbb{R}^d}\xi(x)\theta(T)d\rho(T,x) &-\int_{\mathbb{R}^d}\xi(x)\theta(0)d\rho(0,x)-\int_0^T\int_{\mathbb{R}^d}\theta'(q)\xi(x)d\rho(q,x)\\
=&\int_s^t\theta(q)\frac{d}{dq}\left[\int_{\mathbb{R}^d}\xi(x)d\rho(q,x)\right]dq =  -\int_0^T\theta'(q)\left[\int_{\R^d}\xi(x)d\rho(q,x)\right]dq. 
\end{align*}

Moreover, for every $q \in [0,T]$ we have
\[  \int_{\mathbb{R}^d\times\mathbb{R}^d} \nabla V(x-y)\cdot \nabla\xi(x) d\rho(q,x) \otimes \nu(q,y) = \int_{\R^d} \nabla V \ast \nu (q,x) d\rho(q,x) \]
and, being $W$ symmetric, also 
\[
\frac12\int_{\mathbb{R}^d\times\mathbb{R}^d} \nabla W(x-y)\cdot(\nabla\xi(x)-\nabla\xi(y))d\rho(q,x)\otimes\rho(q,y) =  \int_{\R^d} \partial^0 W \ast \rho(q,x) \cdot \nabla \xi(x) d\rho(q,x),
\]
thus \eqref{eq: weak form} follows.
\end{proof}

\section{Optimal control problem}\label{optimal-control-section}

This section is devoted to the proof of Theorem \ref{thm: OC gradient flows}.  We will show that problem \eqref{eq: OC gradient flow} admits a solution in the class 
\[ A = \Lip_{L',d_*}(0,T;\M_M^R) \, \times \,  \Lip_{L,d_{W_2}}(0,T;\P_2), \]
whenever $W,V$ are as in \emph{\hyperlink{self target}{(Self)},  \hyperlink{cross target}{(Cross)}}, and in the larger class 
\[   A' = \Lip_{L',d_*}(0,T;\M_M^R) \, \times \,  C_{d_n}(0,T;\P_2), \]
in case $V$ is also $\lambda'$-convex for some $\lambda' \leq 0$.

\begin{proof}[Proof of Theorem \ref{thm: OC gradient flows}]

Let us first assume that $W,V$ are as in \emph{\hyperlink{self target}{(Self)},  \hyperlink{cross target}{(Cross)}} respectively. Then the minimization problem \eqref{eq: OC gradient flow} can be formulated 
in the following equivalent way 
\[ \inf_A  \big( \J(\nu,\rho) + \chi_B(\nu,\rho) \big)   \]
where 
\[ B: = A \cap  \{ (\nu,\rho) : \ \rho \text{ is a weak measure solution of \eqref{eq: TE gradient flow} with $\nu$ and initial datum }  \rho(0) = \rho_0\},\]
and $\chi_B$ is the standard characteristic function of the set $B$,  i.e. 
\[  \chi_B(\nu,\rho) = \begin{cases}
0  &\text { if } (\nu,\rho) \in B \\  +\infty & \text{ otherwise.}
\end{cases}  \]

Without loss of generality, we can assume that $\big((\nu_k,\rho_k)\big)_k \subset A$ is a minimizing sequence 
for $\J + \chi_B$ satisfying 
\[  \mathcal{J}(\nu_k,\rho_k)+\chi_B(\nu_k,\rho_k)  < + \infty  \ \  \text{ and }  \ \  \lim_{k \to \infty}  \big( \mathcal{J}(\nu_k,\rho_k)+\chi_B(\nu_k,\rho_k)  \big) =  \inf_A \big(\J(\nu,\rho) + \chi_B(\nu,\rho)\big). \]
In particular,  for every $k \in \N$ the curve $\rho_k$ is a weak measure solution of \eqref{eq: TE gradient flow} with $\nu_k$ and initial datum $\rho_0$ in the sense of Definition \ref{def: weak solutions}. 

We show that the two sequences $(\rho_k)_k$ and $(\nu_k)_k$ independently enjoy good compactness properties with respect to the narrow convergence of measures. 

Let us first focus on the compactness of $(\rho_k)_k$.  By definition we have that $(\rho_k)_k \subset  \Lip_{L,d_{W_2}}(0,T;\P_2)$ and hence $\rho_k(t) \in B_{d_{W_2}}(\rho_0, LT)$  for every $k \in \N$ and $t \in [0,T]$. 
Then,  arguing as in the proof of Proposition \ref{prop: compactness JKO},  we can apply Ascoli-Arzel\`a Theorem to deduce the existence of a $2$-Wasserstein continuous curve $\rho : [0,T] \to \P_2(\R^d)$ and a not relabeled subsequence $\rho_k$ such that $d_n (\rho_k(t),\rho(t)) \to 0$ for every $t \in [0,T]$.  Moreover, recalling that Wasserstein distances are lower semi-continous with respect to the narrow convergence of measures,  it is immediate to observe that $\rho \in \Lip_{L,d_{W_2}}(0,T;\P_2)$. 

The compactness of $(\nu_k)_k$ follows also by a standard application of Ascoli-Arzel\`a Theorem.  
Indeed,  with respect to the weak-$*$ topology,  for each $t \in [0,T]$ the sequence $(\nu_k(t))_k$ is relatively compact in $\M_M^R(\R^d)$ (which is itself compact in $\M(\R^d)$ thanks to Banach-Alaoglu Theorem). Moreover,  $(\nu_k)_k$ is equi-Lipschitz (with constant $L'$) on $[0,T]$.  Therefore,  there exists a limit measure $\nu : [0,T] \to \M_M^R(\R^d)$ that is $L'$-Lipschitz continuous with respect to the weak-$*$ topology and such that,  up to subsequences,  $d_* (\nu_k(t), \nu(t)) \to 0$ for every $t \in [0,T]$. 
Moreover,  since $\spt(\nu_k(t)) \subset \overline{B(0,R)}$,  the sequence $(\nu_k(t))_k$ is tight and hence,  by Prokhorov's Theorem,  we infer that $d_n (\nu_k(t), \nu(t)) \to 0$ for every $t \in [0,T]$. 

Since $\J + \chi_B$ is pointwise (in time) lower semi-continuous with respect to $d_n$,  to conclude that \eqref{eq: OC gradient flow} admits solution in $A$, we are left to show that $\rho$ is a weak measure solution \eqref{eq: TE gradient flow} with $\nu$ and initial datum $\rho_0$ in the sense of Definition \ref{def: weak solutions}.

Since, by construction,  any $\rho_k$ is a weak measure solution of \eqref{eq: TE gradient flow} with $\nu_k$ and initial datum $\rho_0$,  we only need to check that for every $\phi \in C^\infty_c((0,T) \times \R^d)$ it holds 
\begin{align*}
 \lim_{k \to \infty} \int_0^T \int_{\R^d}\left(\frac{\partial\phi}{\partial t}(t,x)+   (\partial^0 W \ast \rho_k (t,x) + \nabla V \ast \nu_k (t,x))   \cdot \nabla\phi(t,x)\right)d\rho_k (t,x) \\
 = \int_0^T \int_{\R^d}\left(\frac{\partial\phi}{\partial t}(t,x)+   (\partial^0 W \ast \rho (t,x) + \nabla V \ast \nu (t,x))   \cdot \nabla\phi(t,x)\right)d\rho (t,x).
\end{align*}

The convergence of  term involving the time derivative and the one involving $\nabla V$ is immediate thanks to the regularity of $\frac{\partial\phi}{\partial t}$ and $\nabla V$ and the claim of Proposition \ref{narroConvProd}, which ensures that $d_n (\nu_k(t) \otimes \rho_k(t)) \to 0$ for every $t \in [0,T]$ on the product space $\R^d \times \R^d$. 

On the other hand,  as already observed in the proof of Theorem \ref{thm: TE gradient flows},  the symmetry of $W$ and the definition of $\partial^0 W $  imply that 

\begin{align*}
\int_{\R^d} \partial^0 W \ast \rho_k (t,x) & \cdot \nabla\phi(t,x) d\rho_k (t, x)  \\
&= \frac{1}{2} \int_{\R^d} \int_{\R^d} \nabla W(x-y) \cdot \big( \nabla \phi(t,x) - \nabla \phi(t,y) \big) d \rho_k(t, x) \rho_k(t, y),
\end{align*}
pointwise in $t$,  and the latter converges to the desired term by applying again Proposition \ref{narroConvProd} to the product $\rho_k(t) \otimes \rho_k(t)$. 

\smallskip

Let us now assume that $V$ is $\lambda'$-convex for some $\lambda' \leq 0$ and consider a minimizing sequence $(\nu_k, \rho_k)$ for $\J + \chi_B$ in the set $A'$.  Once again, without loss of generality we can assume that $\J(\nu_k, \rho_k) + \chi_B(\nu_k, \rho_k) < \infty$ for every $k \in \N$, thus $\rho_k$ is a weak measure solution of \eqref{eq: TE gradient flow} with $\nu_k$ and initial datum $\rho_0$. 

We will see that for each $k$, $\rho_k$ is absolutely continuous with respect to the $2$-Wasserstein distance.  Indeed,  thanks to Theorem 8.3.1 \cite{AGS} and the fact that $\rho_k$ is a weak measure solution,  we only need to show that $\| v(t)  \|_{L^2(\rho_k(t), \R^d)} \in L^1(0,T)$,  where $v(t)$ is the velocity field of the continuity equation \eqref{eq: TE gradient flow}.  In our case we can estimate
\begin{align*}
\| v(t) \|^2_{L^2(\rho_k(t), \R^d)} & = \int_{\R^d} | v(t)(x) |^2 d\rho_k(t,x) \\
& \leq 2 \int_{\R^d} \left( \int_{x\neq y} |\nabla W(x-y)|^2 d\rho_k(t,y) + M \int_{\R^d} |\nabla V(x-y)|^2 d\nu_k(t,y) \right) d\rho_k(t,x) \\
& \leq 2 (Lip (W) + M Lip (V))^2 < \infty, 
\end{align*} 
thus providing the desired absolute continuity. 

We are then in position to apply Proposition \ref{uniqueness of weak solutions} and deduce that $\rho_k$ is the unique weak measure solution of \eqref{eq: TE gradient flow} with $\nu_k$ and initial datum $\rho_0$.  As a consequence,  it must coincide with the one provided by Theorem \ref{thm: TE gradient flows} in the space $\Lip_{L,d_{W_2}}(0,T;\P_2)$.  We then deduce that $(\rho_k)_k \subset \Lip_{L,d_{W_2}}(0,T;\P_2)$ and we conclude by the previous part of the proof.   
\end{proof}

\section{Conclusions and perspectives}
We studied existence of solutions for optimal control problems associated to nonlocal transport equations, used in the modelling of the behaviour of a population of individuals influenced by the presence of  control agents.  The results are proved for a class of mildly singular potentials in a gradient flow formulation for the target transport equation. A natural extension to the present paper will be disclosed dealing with essentially singular potentials as Coulomb or Lennard-Jones type ones. However, for the study of such potentials we expect to adopt different techniques from the one used above, since, up to the authors' knowledge, the hypotheses on the kernels in  \emph{\hyperlink{self target}{(Self)},  \hyperlink{cross target}{(Cross)}} are minimal in the Optimal Transport framework. Moreover, another interesting aspect lies in the numerical discretization of \eqref{control-equation}. Among the others we expect that this task could be performed on one hand using the combination of the two variational formulations (the J.K.O.-scheme and the optimisation problem for the cost functional), on the other hand through a  deterministic reconstructions of the densities starting from \eqref{eq:control_discrete}. We leave these topics for future works.
\section*{Acknowledgments}
SF acknowledges the support from the 04ATE2021-grant "Mathematical models for social innovations: vehicular and pedestrian traffic,  opinion formation and seismology" of the University of L'Aquila. AK  thanks Prof. Maria Colombo for the support of his internship in the AMCV group.

\end{document}